\documentclass[12pt, reqno]{amsart}
\usepackage{ amsmath,amsthm, amscd, amsfonts, amssymb, graphicx, color}
\usepackage[bookmarksnumbered, colorlinks, plainpages,linkcolor=blue,urlcolor=blue,citecolor=blue]{hyperref}
\textwidth 12 cm \textheight 18 cm
\setlength\arraycolsep{3pt}
\oddsidemargin 2.12cm \evensidemargin 1.8cm
\setcounter{page}{1}
\usepackage{setspace}
\usepackage{ulem}



\newtheorem{thm}{Theorem}[section]
\newtheorem{cor}[thm]{Corollary}
\newtheorem{lem}[thm]{Lemma}

\newtheorem{exam}[thm]{Example}
\numberwithin{equation}{section}


\begin{document}

\title{g-Drazin inverse and group inverse for the anti-triangular block-operator matrices}

\author{Huanyin Chen}
\author{Marjan Sheibani}
\address{
School of Mathematics\\ Hangzhou Normal University\\ Hangzhou, China}
\email{<huanyinchen@aliyun.com>}
\address{Farzanegan Campus, Semnan University, Semnan, Iran}
\email{<m.sheibani@semnan.ac.ir>}

\subjclass[2010]{15A09, 47A08, 65F05.} \keywords{g-Drazin inverse; group inverse; block operator matrix; identical subblock; Banach space.}

\begin{abstract} We present the generalized Drazin inverse for certain anti-triangular operator matrices. Let $E,F,EF^{\pi}\in \mathcal{B}(X)^d$. If $EFEF^{\pi}=0$ and $F^2EF^{\pi}=0$, we prove that $M=\left(
  \begin{array}{cc}
    E&I\\
    F&0
  \end{array}
\right)$ has g-Drazin inverse and its explicit representation is established. Moreover, necessary and sufficient conditions are given for the existence of the group inverse of $M$ under the condition $FEF^{\pi}=0$. The group inverse for the anti-triangular block-operator matrices with two identical subblocks is thereby investigated.
These extend the results of  Zhang and Mosi\'c (Filomat, 32(2018), 5907--5917) and Zou, Chen and Mosi\'c (Studia Scient. Math. Hungar., 54(2017), 489--508).
\end{abstract}

\maketitle

\section{Introduction}

Let $\mathcal{B}(X)$ be a Banach algebra of all bounded linear operators over a Banach space $X$. An operator $T$ in $\mathcal{B}(X)$ has generalized Drazin inverse (i.e., g-Drazin inverse) provided that there exists some $S\in \mathcal{B}(X)$ such that $ST=TS, S=STS, T-T^2S$ is quasinilpotent. Such $S$ is unique, if it exists, and we denote it by $T^d$. If we replace the quasinilpotent by nilpotent in $\mathcal{B}(X)$, $S$ is called the Drazin inverse of $a$, denoted by $T^D$. Let $E,F\in \mathcal{B}(X)$ and $I$ be the identity operator over the Banach space $X$. It is attractive to investigate the g-Drazin inverse of the block-operator matrix $M=\left(
  \begin{array}{cc}
    E&I\\
    F&0
  \end{array}
\right)$. It was firstly posed by Campbell that the solutions to singular systems of differential equations is determined by the Drazin inverse of the preceding operator matrix $M$ (see~\cite{C}).

In 2005, Castro-Gonz\'{a}lez and Dopazo gave the representations of the Drazin inverse for a class of operator matrix $\left(
  \begin{array}{cc}
    I&I\\
    F&0
  \end{array}
\right)$ (see~\cite[Theorem 3.3]{CD}). In 2011, Bu et al. investigate the Drazin inverse of the preceding operator matrix $M$ under the condition $EF=FE$ (see~\cite[Theorem 3.2]{B}). In 2016, Zhang investigated the g-Drazin invertibility of $M$ under the conditions $F^dEF^{\pi}=0, F^{\pi}FE=0$ and $F^{\pi}EF^d=0, EFF^{\pi}=0$ (see~\cite[Theorem 2.6, Theorem 2.8]{Zh1}). In ~\cite[Theorem 2.3]{Zhang},
 Zhang and Mosi\'c considered the g-Drazin inverse of $M$ under the wider condition $FEF^{\pi}=0$. We refer the reader to ~\cite{Z0,Z1,Z2} for further recent progresses on the Drazin and g-Drazin inverse of $M$.

The motivation of this paper is to further study the generalized inverse of the block-operator matrix $M$ under the condition $FEF^{\pi}=0$.
In Section 2, we present the g-Drazin inverse for the operator matrix $M$ under the condition $EFEF^{\pi}=0$ and $F^2EF^{\pi}=0$, which extend ~\cite[Theorem 2.3]{Zhang} to a wider case. As the Drazin and g-Drazin inverses coincide with each other for a complex matrix, our results indeed provide algebraic method to find all function solutions of a new class of singular differential equations posed by Campbell (see~\cite{C}).

An operator $T$ in $\mathcal{B}(X)$ has Drazin inverse if and only if there exists some $S\in \mathcal{B}(X)$ such that $ST=TS, S=STS, T^n=T^{n+1}S$ for some $n\in {\Bbb N}$. Such $S$ is unique, if it exists, and we denote it by $T^D$. Such smallest $n$ is called the Drazin index of $T$ and denote it by $ind(T)$. If $T$ has Drazin index $1$, $T$ is said to have inverse $S$, and denote its group inverse by $T^{\#}$. The group inverse of the block-operator matrices over a Banach space has interesting applications of resistance distances to the bipartiteness of graphs (see~\cite{SW}). Many authors have studied such problems from many different views, e.g., ~\cite{B1,C1,C2,C3,MD2,M,Z2}.

In~\cite[Theorem 2.10]{Z}, Zou et al. studied the group inverse for $M$ under the condition $EF=0$. In Section 3, we present necessary and sufficient condition on
the existence of the group inverse and its representation for the block operator matrix $M$ under the condition $FEF^{\pi}=0$.

In~\cite[Theorem 5]{C2}, Cao et al. considered the group inverse for a block matrix with identical subblocks over a right Ore domain. Finally, in the last section,
we further investigate the necessary and sufficient condition for the existence of the the group inverse of a $2\times 2$ block-operator matrix $\left(
\begin{array}{cc}
E&F\\
F&0
\end{array}
\right)$ with identical subblocks. The explicit formula of its group inverse is thereby given under the same condition $FEF^{\pi}=0$.

Throughout this paper, ${\Bbb C}^{n\times n}$ denotes the Banach algebra of all $n\times n$ complex matrices. Let $X$ be a Banach space. We use $\mathcal{B}(X)$ to denote the Banach algebra of bounded linear operator on $X$. Let $T\in \mathcal{B}(X)^d$. We use $T^{\pi}$ to stand for the spectral idempotent operator $I-TT^d$. Let $p^2=p,X\in \mathcal{B}(X)$. Then we write $X=pXp+pX(1-p)+(1-p)Xp+(1-p)X(1-p),$
and induce a Pierce representation given by the matrix
$X=\left(\begin{array}{cc}
pXp&pXp^{\pi}\\
p^{\pi}Xp&p^{\pi}xp^{\pi}
\end{array}
\right)_p.$

\section{g-Drazin inverse of anti-triangular block matrices}

The aim of this section is to investigate the g-Drazin invertibility of the block-operator matrix $M=\left(
  \begin{array}{cc}
    E&I\\
    F&0
  \end{array}
\right)$. The representation of the g-Drazin inverse of $M$ is given under some new kind of conditions. We begin with

\begin{lem} Let $p^2=p\in \mathcal{B}(X)$ and $X=\left(\begin{array}{cc}
A&0\\
C&B
\end{array}
\right)_p$, where $A,B\in \mathcal{B}(X)^d$ and $C\in \mathcal{B}(X)$. Then $X\in \mathcal{B}(X)^d$ and $X^d=\left(\begin{array}{cc}
A^d&0\\
Z&B^d
\end{array}
\right)_p,$ where $Z=\sum\limits_{i=0}^{\infty}(B^d)^{i+2}CA^iA^{\pi}+\sum\limits_{i=0}^{\infty}B^iB^{\pi}C(A^d)^{i+2}-B^dCA^d.$
\end{lem}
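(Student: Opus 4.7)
The plan is to verify directly that the proposed operator $Y := \left(\begin{array}{cc} A^d & 0 \\ Z & B^d \end{array}\right)_p$ satisfies the three defining properties of the g-Drazin inverse of $X$: the commutation $XY = YX$, the outer-inverse identity $YXY = Y$, and the quasinilpotency of $X - X^2Y$. As a preliminary step, I would confirm that $Z$ defines a bounded operator: since $AA^{\pi}$ and $BB^{\pi}$ are quasinilpotent, Gelfand's formula gives $\|A^i A^{\pi}\|^{1/i} \to 0$ and $\|B^i B^{\pi}\|^{1/i} \to 0$, so the root test yields absolute convergence of both series defining $Z$.

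For commutation, the diagonal blocks of $XY$ and $YX$ coincide automatically from $AA^d = A^d A$ and $BB^d = B^d B$, so the real work lies in the $(2,1)$-entry identity $CA^d + BZ = ZA + B^d C$. Using the shift relations $B\cdot (B^d)^{i+2} = (B^d)^{i+1}$ and $(A^d)^{i+2}\cdot A = (A^d)^{i+1}$, both consequences of $B^{\pi}B^d = 0$ and $A^{\pi}A^d = 0$, each of the two infinite sums appearing in $BZ - ZA$ telescopes to a single boundary term, producing $B^d C A^{\pi}$ and $-B^{\pi} C A^d$ respectively. Expanding the residual $-BB^d C A^d + B^d C A A^d = -(I - B^{\pi})CA^d + B^d C(I - A^{\pi})$ and collecting terms then yields $BZ - ZA = B^d C - CA^d$, which is exactly the desired identity. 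Once commutation is available, $YXY = Y$ reduces on the $(2,1)$-block to $ZAA^d + B^d C A^d + B^d B Z = Z$; substituting the formula for $Z$, right-multiplication by $AA^d = I - A^{\pi}$ kills the first series and preserves the second, while left-multiplication by $B^d B = I - B^{\pi}$ acts symmetrically, and the two constant terms $-B^d CA^d$ cancel exactly against the explicit $+B^d C A^d$.

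Finally, $X - X^2Y$ has Pierce form with diagonal blocks $A - A^2 A^d = AA^{\pi}$ and $B - B^2 B^d = BB^{\pi}$, both quasinilpotent by hypothesis. Since this operator is block lower triangular in the splitting induced by $p$, a standard Schur-complement construction of the resolvent shows that $\sigma(X - X^2Y) \subseteq \sigma(AA^{\pi}) \cup \sigma(BB^{\pi}) = \{0\}$, so $X - X^2 Y$ is quasinilpotent. The main obstacle in the whole argument is the telescoping bookkeeping inside the commutation identity; once that is settled, the remaining verifications reduce to routine applications of the projector relations $A^{\pi}A^d = 0$ and $B^{\pi}B^d = 0$.
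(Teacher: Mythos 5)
Your argument is correct, but it is worth noting that the paper does not actually prove this lemma at all: its ``proof'' is a citation to Djordjevi\'c--Stanimirovi\'c \cite[Theorem 5.1]{DS}. What you have written is therefore a self-contained substitute, namely the standard direct verification that $Y=\left(\begin{smallmatrix} A^d&0\\ Z&B^d\end{smallmatrix}\right)_p$ satisfies the three axioms $XY=YX$, $YXY=Y$, and $X-X^2Y$ quasinilpotent. All the key steps check out: the norm convergence of the two series follows from $A^iA^{\pi}=(AA^{\pi})^i$, $B^iB^{\pi}=(BB^{\pi})^i$ and Gelfand's formula exactly as you say; the $(2,1)$-entry of the commutation identity reduces to $BZ-ZA=B^dC-CA^d$, and your telescoping does produce the boundary terms $B^dCA^{\pi}$ and $-B^{\pi}CA^d$, which combine with $-BB^dCA^d+B^dCAA^d$ to give the right-hand side; the quasinilpotency of the block lower-triangular operator $X-X^2Y$ with quasinilpotent diagonal blocks $AA^{\pi}$, $BB^{\pi}$ is standard. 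The only imprecision is in the $YXY=Y$ step: the bookkeeping there is that $ZAA^d$ and $B^dBZ$ each contribute a term $-B^dCA^d$, only one of which is cancelled by the explicit $+B^dCA^d$, the survivor being precisely the constant term needed to reconstitute $Z$; your phrasing suggests a complete cancellation, which would lose that term, so you should state this more carefully. That is a matter of wording, not a gap. Compared with the paper, your route costs a page of routine computation but buys independence from the external reference; the paper's citation is shorter but leaves the reader to trust \cite{DS}.
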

\begin{proof} See ~\cite[Theorem 5.1]{DS}.\end{proof}

\begin{lem} Let $P,Q\in \mathcal{B}(X)^d$. If $PQP=0$ and $Q^2P=0$, then $P+Q\in \mathcal{B}(X)^d$. In this case,
$$\begin{array}{lll}
(P+Q)^d&=&\sum\limits_{i=0}^{\infty}(P+Q)(P^d)^{i+2}Q^iQ^{\pi}+\sum\limits_{i=0}^{\infty}P^{i+1}P^{\pi}(Q^d)^{i+2}\\
&+&\sum\limits_{i=0}^{\infty}QP^iP^{\pi}(Q^d)^{i+2}-(P+Q)P^dQ^d.
\end{array}$$
\end{lem}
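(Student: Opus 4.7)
My plan is to reduce $P+Q$ to a block lower-triangular form fitting Lemma~2.1, after a one-step $NK=0$ splitting. I begin by collecting arithmetic consequences of the hypotheses. From $Q^{2}P=0$ one deduces $QQ^{d}P=0$, equivalently $Q^{\pi}P=P$; consequently $Q^{d}P=0$, $Q^{k}P=0$ for $k\ge 2$, and $Q^{k}Q^{\pi}P=Q^{k}P$ for all $k\ge 0$. From $PQP=0$, right-multiplication by $P^{d}$ gives $PQ\cdot PP^{d}=0$, i.e., $PQ=PQP^{\pi}$. Combining, $QPQP=Q(PQP)=0$, hence $(P+Q)(QP)=PQP+Q^{2}P=0$.

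Next I take the Pierce decomposition with respect to $p:=P^{\pi}$. Since $P$ commutes with $p$,
$$P=\begin{pmatrix}PP^{\pi}&0\\ 0&P^{2}P^{d}\end{pmatrix}_{p},$$
with $PP^{\pi}$ quasinilpotent and $P^{2}P^{d}$ invertible in the corner $p^{\pi}\mathcal{B}(X)p^{\pi}$ (its corner inverse being $P^{d}$). Writing $Q=\bigl(\begin{smallmatrix}Q_{1}&Q_{2}\\ Q_{3}&Q_{4}\end{smallmatrix}\bigr)_{p}$, the hypotheses combined with the invertibility of $P^{2}P^{d}$ in the lower corner force $Q_{4}=0$, $PP^{\pi}Q_{2}=0$, $Q_{3}\cdot PP^{\pi}=0$, $Q_{1}Q_{2}=0$, $Q_{3}Q_{2}=0$, together with the residual identities $Q_{3}Q_{1}\cdot PP^{\pi}=0$ and $(Q_{1}^{2}+Q_{2}Q_{3})\cdot PP^{\pi}=0$.

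I then split $P+Q=N+K$, where
$$N=\begin{pmatrix}PP^{\pi}+Q_{1}&0\\ Q_{3}&P^{2}P^{d}\end{pmatrix}_{p},\qquad K=\begin{pmatrix}0&Q_{2}\\ 0&0\end{pmatrix}_{p}.$$
A short block computation using $PP^{\pi}Q_{2}=0$, $Q_{1}Q_{2}=0$, $Q_{3}Q_{2}=0$ gives $NK=0$, and plainly $K^{2}=0$, so $K^{d}=0$ and $K^{\pi}=I$. The standard Hartwig--Wang--Wei sum formula under $NK=0$ then collapses to $(N+K)^{d}=N^{d}+K(N^{d})^{2}$, reducing the task to producing $N^{d}$ via Lemma~2.1.

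The main obstacle lies here: Lemma~2.1 requires the g-Drazin invertibility of the upper-left corner $A:=PP^{\pi}+Q_{1}=P^{\pi}(P+Q)P^{\pi}$ inside $p\mathcal{B}(X)p$. My approach is to construct $A^{d}$ by $P^{\pi}$-symmetrizing the target expression and verify the three defining g-Drazin properties at the corner level, using the residual identities $Q_{1}^{2}\cdot PP^{\pi}=0$ and $Q_{3}Q_{1}\cdot PP^{\pi}=0$, which make the quasinilpotent structure controllable. Once $A^{d}$ is in hand, Lemma~2.1 produces $N^{d}$; substituting into $(P+Q)^{d}=N^{d}+K(N^{d})^{2}$ and regrouping $P^{i+1}P^{\pi}(Q^{d})^{i+2}+QP^{i}P^{\pi}(Q^{d})^{i+2}=(P+Q)P^{i}P^{\pi}(Q^{d})^{i+2}$ reproduces the three infinite series and the $-(P+Q)P^{d}Q^{d}$ correction in the stated formula.
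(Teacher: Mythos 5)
The paper does not actually prove this lemma; it cites it to Guo's thesis \cite[Theorem 4.2.2]{G}, so your attempt must be judged on its own terms. Your preparatory work is correct: the consequences $Q^{d}P=0$, $Q^{\pi}P=P$, $PQ=PQP^{\pi}$, the Pierce decomposition at $p=P^{\pi}$, the forced identities $Q_{4}=0$, $PP^{\pi}Q_{2}=0$, $Q_{3}PP^{\pi}=0$, $Q_{1}Q_{2}=Q_{3}Q_{2}=0$ (using invertibility of $P^{2}P^{d}$ in the corner), and hence $NK=0$, $K^{2}=0$, so that Lemma 2.4 collapses to $(N+K)^{d}=N^{d}+K(N^{d})^{2}$ --- all of this checks out, and it correctly concentrates the entire difficulty into one place.

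But that one place is exactly where the proof stops. To apply Lemma 2.1 to $N$ you must show that $A=PP^{\pi}+P^{\pi}QP^{\pi}$ is g-Drazin invertible in the corner algebra $p\mathcal{B}(X)p$, and you only announce a strategy (``construct $A^{d}$ by $P^{\pi}$-symmetrizing the target expression and verify the three defining g-Drazin properties''). Nothing is actually constructed or verified. This is not a routine omission: $P^{\pi}QP^{\pi}$ does not inherit g-Drazin invertibility from $Q$ (compressions by a non-commuting idempotent generally destroy it), and the identities you have in the corner --- $a$ quasinilpotent, $aba=0$, $b^{2}a=0$ with $a=PP^{\pi}$, $b=Q_{1}$ --- are precisely the hypotheses of the lemma you are trying to prove, now with the added problem that $b$ is not known to be g-Drazin invertible. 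So the reduction is circular in shape unless you supply an independent argument. Even for your explicit candidate (the $P^{\pi}$-compression of the target formula, namely $\sum_{i}P^{\pi}(P+Q)P^{i}P^{\pi}(Q^{d})^{i+2}P^{\pi}$), the verification that $A-A^{2}A^{d}$ is quasinilpotent is a genuine spectral argument in a Banach algebra, not an algebraic identity check, and no indication is given of how it would go. Until the g-Drazin invertibility of $A$ is actually established, the proof is incomplete at its central step; the concluding regrouping of the series (which is itself only sketched) cannot rescue it.
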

\begin{proof} See ~\cite[Theorem 4.2.2]{G}.\end{proof}

We are now ready to prove:

\begin{thm} Let $E,F\in \mathcal{B}(X)^d$. If $EFE=0$ and $F^2E=0$, then $M=\left(
  \begin{array}{cc}
    E&I\\
    F&0
  \end{array}
\right)$ has g-Drazin inverse. In this case, $M^d=\left(
\begin{array}{cc}
\Lambda&\Sigma\\
\Gamma&\Delta
\end{array}
\right),$ where
$$\begin{array}{lll}
\Lambda&=&EE^{\pi}F^d-FE^dF^d+\sum\limits_{i=0}^{\infty}[I+F(E^d)^2](E^d)^{2i+1}F^iF^{\pi}\\
&+&\sum\limits_{i=0}^{\infty}E^{2i+3}E^{\pi}(F^d)^{i+2}+\sum\limits_{i=0}^{\infty} FE^{2i+1}E^{\pi}(F^d)^{i+2},\\
\Sigma&=&-EE^dF^d-F(E^d)^2F^d+\sum\limits_{i=0}^{\infty}[I+F(E^d)^2](E^d)^{2i+2}F^iF^{\pi}\\
&+&\sum\limits_{i=0}^{\infty}E^{2i+2}E^{\pi}(F^d)^{i+2}+\sum\limits_{i=0}^{\infty}FE^{2i}E^{\pi}(F^d)^{i+2},\\
\Gamma&=&FE^{\pi}F^d+\sum\limits_{i=0}^{\infty}F(E^d)^{2i+2}F^iF^{\pi}+\sum\limits_{i=0}^{\infty}FE^{2i+2}E^{\pi}(F^d)^{i+2},\\
\Delta&=&-FE^dF^d+\sum\limits_{i=0}^{\infty}F(E^d)^{2i+3}F^iF^{\pi}+\sum\limits_{i=0}^{\infty}FE^{2i+1}E^{\pi}(F^d)^{i+2}.
\end{array}$$\end{thm}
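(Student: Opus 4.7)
The plan is to apply Lemma~1.2 to the splitting
\[
M = P + Q, \quad P = \begin{pmatrix} E & 0 \\ 0 & 0 \end{pmatrix}, \quad Q = \begin{pmatrix} 0 & I \\ F & 0 \end{pmatrix}.
\]
I first compute the g-Drazin data of each summand. $P$ is block-diagonal, so immediately $P^d = \begin{pmatrix} E^d & 0 \\ 0 & 0 \end{pmatrix}$ and $P^{\pi} = \begin{pmatrix} E^{\pi} & 0 \\ 0 & I \end{pmatrix}$. The identity $Q^2 = \operatorname{diag}(F,F)$ then yields, by direct verification of the g-Drazin axioms, $Q^d = \begin{pmatrix} 0 & F^d \\ FF^d & 0 \end{pmatrix}$ and $Q^{\pi} = \operatorname{diag}(F^{\pi},F^{\pi})$. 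The block shape of $P$ forces $PQP = 0$. The second condition $Q^{2}P = 0$ of Lemma~1.2 reduces to $FE = 0$, which is not among our hypotheses; however, the assumptions $EFE = 0$ and $F^{2}E = 0$ give exactly the weaker vanishings $PQ^{2}P = 0$ and $Q^{3}P = 0$, as is immediate from the block computation.

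The proof then invokes a generalized additive formula for $(P+Q)^d$ valid under $PQP = PQ^{2}P = Q^{3}P = 0$ in place of the stronger hypotheses of Lemma~1.2. Such a generalization follows by the same perturbation argument as Lemma~1.2, with the $Q^{2}P$ block treated as a quasinilpotent correction; the identities $E^{d}FE = 0$ and $F^{2}F^{d}E = 0$, which one derives from the hypotheses by multiplying by $E^d$ and $F^d$, are precisely what make the extra correction terms collapse.

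Expansion of the formula is then mechanical but intricate. The key block-algebraic identities are $Q^{2k}Q^{\pi} = \operatorname{diag}(F^{k}F^{\pi},F^{k}F^{\pi})$ and $Q^{2k+1}Q^{\pi} = \begin{pmatrix} 0 & F^{k}F^{\pi} \\ F^{k+1}F^{\pi} & 0 \end{pmatrix}$, together with $P^{i+1}P^{\pi} = \operatorname{diag}(E^{i+1}E^{\pi},0)$ for $i \geq 1$ and $P^{\pi} = \operatorname{diag}(E^{\pi},I)$. Combined with $E(E^{d})^{n} = (E^{d})^{n-1}$ and $F(F^{d})^{n+1} = (F^{d})^{n}$, the series in the additive formula split by parity of the summation index into four groups that match exactly the sums appearing in $\Lambda,\Sigma,\Gamma,\Delta$; the correction term $-(P+Q)P^{d}Q^{d}$ produces the $-EE^{d}F^{d}$, $-F(E^{d})^{2}F^{d}$, and $-FE^{d}F^{d}$ summands.

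The main obstacle is establishing the generalized form of Lemma~1.2 under the hypotheses $PQP = PQ^{2}P = Q^{3}P = 0$ rather than the cleaner pair $PQP = Q^{2}P = 0$; everything downstream is bookkeeping. An alternative that sidesteps this is to take the displayed expression as an ansatz and verify the three defining axioms $MM^{d} = M^{d}M$, $M^{d}MM^{d} = M^{d}$, and quasinilpotency of $M - M^{2}M^{d}$ directly, reducing the proof to a parallel collection of series manipulations driven by the Pierce-type calculus of $E^{\pi}$, $F^{\pi}$ and the hypotheses $EFE = 0$, $F^{2}E = 0$.
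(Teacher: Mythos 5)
There is a genuine gap at the heart of your argument. With your splitting $P=\left(\begin{smallmatrix}E&0\\0&0\end{smallmatrix}\right)$, $Q=\left(\begin{smallmatrix}0&I\\F&0\end{smallmatrix}\right)$, you correctly observe that $Q^{2}P=\operatorname{diag}(FE,0)$ does not vanish under the hypotheses, and that what you actually get is only $PQP=PQ^{2}P=Q^{3}P=0$. You then invoke ``a generalized additive formula for $(P+Q)^d$ valid under $PQP=PQ^{2}P=Q^{3}P=0$,'' asserting it ``follows by the same perturbation argument'' with the $Q^{2}P$ block ``treated as a quasinilpotent correction.'' This is precisely the hard part, and it is not established: additive results for the g-Drazin inverse under successively weaker product-vanishing conditions are not routine corollaries of one another (each such weakening is itself a separate theorem in this literature), you never write down the formula you intend to use, and without it the subsequent ``mechanical'' expansion cannot be checked. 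Note also that $E^{d}FE=0$ does not follow from $EFE=0$ by ``multiplying by $E^d$'' — $E^d$ is a limit of polynomials in $E$, not a left factor you can cancel — so even the auxiliary identities you lean on are unjustified. Your fallback (verify the axioms for the displayed $M^d$ directly) is a suggestion, not a proof; checking quasinilpotency of $M-M^{2}M^{d}$ for that series expression is itself a substantial computation you have not carried out.

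The paper avoids this entire difficulty by a different choice of splitting: it passes to $M^{2}=\left(\begin{smallmatrix}E^{2}+F&E\\FE&F\end{smallmatrix}\right)=P+Q$ with $P=\left(\begin{smallmatrix}E^{2}&E\\0&0\end{smallmatrix}\right)$ and $Q=\left(\begin{smallmatrix}F&0\\FE&F\end{smallmatrix}\right)$. For these blocks the hypotheses $EFE=0$ and $F^{2}E=0$ yield exactly $PQP=0$ and $Q^{2}P=0$, so the already-known additive lemma (Lemma 2.2) applies as stated, and the theorem follows from $M^{d}=M(M^{2})^{d}$ together with the triangular-block formula of Lemma 2.1 for $Q^{d}$. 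If you want to salvage your approach, you would either have to prove the generalized additive lemma you invoke, or switch to the paper's squaring trick, which reduces the problem to hypotheses the existing lemma already covers.
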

\begin{proof} Clearly, we have $$M^2=\left(\begin{array}{cc}
E^2+F&E\\
FE&F
\end{array}
\right)=P+Q,$$ where $$P=\left(\begin{array}{cc}
E^2&E\\
0&0
\end{array}
\right), Q=\left(\begin{array}{cc}
F&0\\
FE&F
\end{array}
\right).$$ Using Lemma 2.1, we have
$$\begin{array}{l}
         Q^d=\left(\begin{array}{cc}
F^d&0\\
FE(F^d)^{2}&F^d
\end{array}
\right), Q^{\pi}=\left(\begin{array}{cc}
F^{\pi}&0\\
-FEF^d&F^{\pi}
\end{array}
\right).
\end{array}$$ Likewise, we obtain
$P^d=\left(
         \begin{array}{cc}
          (E^d)^2 &(E^d)^3 \\
          0 & 0 \\
          \end{array}
         \right), P^{\pi}=\left(
         \begin{array}{cc}
          E^{\pi}&-E^d\\
          0 & I \\
          \end{array}
         \right).$
         One easily checks that $$\begin{array}{lll}
PQP&=&\left(\begin{array}{cc}
E^2F&EF\\
0&0
\end{array}
\right)\left(\begin{array}{cc}
E^2&E\\
0&0
\end{array}
\right)\\
&=&\left(\begin{array}{cc}
E^2FE^2&E^2FE\\
0&0
\end{array}
\right)\\
&=&0;\\
Q^2P&=&\left(\begin{array}{cc}
F&0\\
FE&F
\end{array}
\right)\left(\begin{array}{cc}
FE^2&FE\\
0&0
\end{array}
\right)\\
&=&\left(\begin{array}{cc}
F^2E^2&F^2E\\
FEFE^2&FEFE
\end{array}
\right)\\
&=&0.
\end{array}$$ In light of Lemma 2.2, $M^2$ has g-Drazin inverse, and so $M$ has g-Drazin inverse. In this case, $$\begin{array}{lll}
M^d&=&M(M^2)^d\\
&=&\sum\limits_{i=0}^{\infty}M^3(P^d)^{i+2}Q^iQ^{\pi}+\sum\limits_{i=0}^{\infty}MP^{i+1}P^{\pi}(Q^d)^{i+2}\\
&+&\sum\limits_{i=0}^{\infty}MQP^iP^{\pi}(Q^d)^{i+2}-M^3P^dQ^d.
\end{array}$$
We compute that
$$\begin{array}{ll}
&M^3(P^d)^{i+2}Q^iQ^{\pi}\\
=&\left(
  \begin{array}{cc}
    E&I\\
    F&0
  \end{array}
\right)^3\left(
         \begin{array}{cc}
          (E^d)^2 &(E^d)^3 \\
          0 & 0 \\
          \end{array}
         \right)^{i+2}\left(\begin{array}{cc}
F&0\\
FE&F
\end{array}
\right)^i\left(\begin{array}{cc}
F^{\pi}&0\\
-FEF^d&F^{\pi}
\end{array}
\right)\\
=&\left(
  \begin{array}{cc}
    E^3+EF+FE&E^2+F\\
    FE^2+F^2&FE
  \end{array}
\right)\left(
         \begin{array}{cc}
          (E^d)^{2i+4} &(E^d)^{2i+5}\\
          0 & 0 \\
          \end{array}
         \right)\\
        &\left(\begin{array}{cc}
F^i&0\\
FEF^{i-1}&F^i
\end{array}
\right)\left(\begin{array}{cc}
F^{\pi}&0\\
-FEF^d&F^{\pi}
\end{array}
\right)\\
=&\left(
  \begin{array}{cc}
    (E^d)^{2i+1}+F(E^d)^{2i+3}&(E^d)^{2i+2}+F(E^d)^{2i+4}\\
    F(E^d)^{2i+2}&F(E^d)^{2i+3}
  \end{array}
\right)\\
        &\left(\begin{array}{cc}
F^iF^{\pi}&0\\
FEF^{i-1}F^{\pi}&F^iF^{\pi}
\end{array}
\right)\\
=&\left(
  \begin{array}{cc}
  [I+F(E^d)^2](E^d)^{2i+1}F^iF^{\pi}&[I+F(E^d)^2](E^d)^{2i+2}F^iF^{\pi}\\
    F(E^d)^{2i+2}F^iF^{\pi}&F(E^d)^{2i+3}F^iF^{\pi}
  \end{array}
\right) (i\geq 1),\\
\end{array}$$
$$\begin{array}{ll}
&M^3(P^d)^2Q^{\pi}\\
=&\left(
  \begin{array}{cc}
    E&I\\
    F&0
  \end{array}
\right)^3\left(
         \begin{array}{cc}
          (E^d)^2 &(E^d)^3 \\
          0 & 0 \\
          \end{array}
         \right)^2\left(\begin{array}{cc}
F^{\pi}&0\\
-FEF^d&F^{\pi}
\end{array}
\right)\\
=&\left(
  \begin{array}{cc}
    E^3+EF+FE&E^2+F\\
    FE^2+F^2&FE
  \end{array}
\right)\left(
         \begin{array}{cc}
          (E^d)^{4} &(E^d)^{5}\\
          0 & 0 \\
          \end{array}
         \right)\left(\begin{array}{cc}
F^{\pi}&0\\
-FEF^d&F^{\pi}
\end{array}
\right)\\
=&\left(
  \begin{array}{cc}
    E^d+F(E^d)^3&(E^d)^2+F(E^d)^4\\
    F(E^d)^2&F(E^d)^3
  \end{array}
\right)\left(\begin{array}{cc}
F^{\pi}&0\\
-FEF^d&F^{\pi}
\end{array}
\right)\\
=&\left(
  \begin{array}{cc}
  [I+F(E^d)^2]E^dF^iF^{\pi}&[I+F(E^d)^2](E^d)^2F^{\pi}\\
    F(E^d)^2F^{\pi}&F(E^d)^3F^{\pi}
  \end{array}
\right),\\
\end{array}$$
$$\begin{array}{ll}
&MP^{i+1}P^{\pi}(Q^d)^{i+2}\\
=&\left(
  \begin{array}{cc}
    E&I\\
    F&0
  \end{array}
\right)\left(\begin{array}{cc}
E^2&E\\
0&0
\end{array}
\right)^{i+1}
\left(
         \begin{array}{cc}
          E^{\pi}&-E^d\\
          0 & I \\
          \end{array}
         \right) \left(\begin{array}{cc}
F^d&0\\
FE(F^d)^{2}&F^d
\end{array}
\right)^{i+2}\\
=&{\tiny\left(
\begin{array}{cc}
E^{2i+3}E^{\pi}&-E^{2i+3}E^d+E^{2i+2}\\
FE^{2i+2}E^{\pi}&-FE^{2i+2}E^d+FE^{2i+1}\\
\end{array}
\right)
\left(\begin{array}{cc}
(F^d)^{i+2}&0\\
FE(F^d)^{i+3}&(F^d)^{i+2}
\end{array}
\right)}\\
=& \left(\begin{array}{cc}
E^{2i+3}E^{\pi}(F^d)^{i+2}&E^{2i+2}E^{\pi}(F^d)^{i+2}\\
FE^{2i+2}E^{\pi}(F^d)^{i+2}&FE^{2i+1}E^{\pi}(F^d)^{i+2}
\end{array}
\right),\\
&\\
\end{array}$$
$$\begin{array}{ll}
&MQP^{\pi}(Q^d)^{2}\\
=&\left(
  \begin{array}{cc}
    E&I\\
    F&0
  \end{array}
\right)\left(\begin{array}{cc}
F&0\\
FE&F
\end{array}
\right)\left(
         \begin{array}{cc}
          E^{\pi}&-E^d\\
          0 & I \\
          \end{array}
         \right)
\left(\begin{array}{cc}
(F^d)^2&0\\
FE(F^d)^{3}&(F^d)^2
\end{array}
\right)\\
=&\left(\begin{array}{cc}
EFE^{\pi}(F^d)^2+FEE^{\pi}(F^d)^2&F^d-FEE^d(F^d)^2\\
FF^d&0
\end{array}
\right),
\end{array}$$
$$\begin{array}{ll}
&MQP^iP^{\pi}(Q^d)^{i+2}\\
=&{\tiny\left(
  \begin{array}{cc}
    EF+FE&F\\
    F^2&0
  \end{array}
\right)\left(\begin{array}{cc}
E^2&E\\
0&0
\end{array}
\right)^{i}\left(
         \begin{array}{cc}
          E^{\pi}&-E^d\\
          0 & I \\
          \end{array}
         \right)
\left(\begin{array}{cc}
F^d&0\\
FE(F^d)^{2}&F^d
\end{array}
\right)^{i+2}}\\
=&\left(\begin{array}{cc}
FE^{2i+1}E^{\pi}(F^d)^{i+2}&FE^{2i}E^{\pi}(F^d)^{i+2}\\
0&0
\end{array}
\right) (i\geq 1),\\
\end{array}$$
$$\begin{array}{ll}
&M^3P^dQ^d\\
=&\left(
  \begin{array}{cc}
    E^3+EF+FE&E^2+F\\
    FE^2+F^2&FE
  \end{array}
\right)\left(
         \begin{array}{cc}
          (E^d)^2F^d&(E^d)^3F^d\\
          0 & 0 \\
          \end{array}
         \right)\\
=&\left(
  \begin{array}{cc}
    E^2E^dF^d+FE^dF^d&EE^dF^d+F(E^d)^2F^d\\
    FEE^dF^d&FE^dF^d
  \end{array}
\right).
\end{array}$$ Therefore
$M^d=\left(
\begin{array}{cc}
\Lambda&\Sigma\\
\Gamma&\Delta
\end{array}
\right),$ where $\Lambda,\Sigma,\Gamma,\Delta$ are as in the preceding stating. This completes the proof.\end{proof}

\begin{lem} Let $P,Q\in \mathcal{B}(X)^{d}$. If $PQ=0$, then $P+Q\in \mathcal{B}(X)^{d}$. In this case, $$(P+Q)^d=\sum\limits_{i=0}^{\infty}Q^iQ^{\pi}(P^d)^{i+1}+\sum\limits_{i=0}^{\infty}(Q^d)^{i+1}P^iP^{\pi}.$$
\end{lem}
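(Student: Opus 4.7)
The plan is to deduce the result from Lemma 2.2 by interchanging the roles of $P$ and $Q$. Setting $P'=Q$ and $Q'=P$, the two hypotheses of Lemma 2.2 become $P'Q'P'=QPQ=0$ and $Q'{}^{2}P'=P^{2}Q=0$, both of which follow at once from $PQ=0$. Therefore Lemma 2.2 gives $P+Q\in\mathcal{B}(X)^{d}$ together with the expansion
$$(P+Q)^{d}=\sum_{i=0}^{\infty}(P+Q)(Q^{d})^{i+2}P^{i}P^{\pi}+\sum_{i=0}^{\infty}Q^{i+1}Q^{\pi}(P^{d})^{i+2}+\sum_{i=0}^{\infty}PQ^{i}Q^{\pi}(P^{d})^{i+2}-(P+Q)Q^{d}P^{d}.$$

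Next I would exploit $PQ=0$ to collapse the unwanted pieces. The crucial observation is that $PQ^{d}=0$: because $Q^{d}=Q^{d}(QQ^{d})$ the range of $Q^{d}$ lies in the range of $Q$, which $P$ annihilates. This forces $P(Q^{d})^{k}=0$ for all $k\ge 1$, and clearly $PQ^{i}=0$ for all $i\ge 1$ as well. Consequently, in the first sum only the $Q$-part of $P+Q$ contributes and the identity $Q(Q^{d})^{i+2}=(Q^{d})^{i+1}$ (which follows from $QQ^{d}$ acting as the identity on $\operatorname{im}(Q^{d})$) converts it into $\sum_{i=0}^{\infty}(Q^{d})^{i+1}P^{i}P^{\pi}$, matching the first sum in the target formula. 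In the third sum only the $i=0$ term survives and equals $P(P^{d})^{2}=P^{d}$ (using $PP^{d}=P^{d}P$), while the subtracted product $(P+Q)Q^{d}P^{d}$ reduces to $QQ^{d}P^{d}$.

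The remaining bookkeeping combines these surviving contributions with the second sum. Writing the $i=0$ term of the target as $Q^{\pi}P^{d}=P^{d}-QQ^{d}P^{d}$, one has
$$\sum_{i=0}^{\infty}Q^{i+1}Q^{\pi}(P^{d})^{i+2}+P^{d}-QQ^{d}P^{d}=(P^{d}-QQ^{d}P^{d})+\sum_{j=1}^{\infty}Q^{j}Q^{\pi}(P^{d})^{j+1}=\sum_{i=0}^{\infty}Q^{i}Q^{\pi}(P^{d})^{i+1}$$
after the reindexing $j=i+1$. Combined with the simplified first sum, this is exactly the stated formula. The main obstacle I anticipate is justifying $PQ^{d}=0$ cleanly for g-Drazin (rather than ordinary Drazin) inverses, but the range-inclusion argument above handles it in one line; the rest is algebraic substitution and reindexing.
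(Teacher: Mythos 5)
Your derivation is correct, but it takes a different route from the paper: the paper disposes of this lemma with a one-line citation to Djordjevi\'c and Wei \cite[Theorem 2.3]{DW}, whereas you deduce it as a special case of Lemma 2.2 by swapping $P$ and $Q$. The swap is legitimate ($QPQ=0$ and $P^2Q=0$ both follow from $PQ=0$), your key simplification $PQ^d=P Q(Q^d)^2=0$ is the right one-line argument, the collapse $Q(Q^d)^{i+2}=(Q^d)^{i+1}$ and the survival of only the $i=0$ term $PQ^{\pi}(P^d)^2=P(P^d)^2=P^d$ in the third sum are all correct, and the final reindexing that absorbs $P^d-QQ^dP^d=Q^{\pi}P^d$ as the $i=0$ term of $\sum_{i\ge 0}Q^iQ^{\pi}(P^d)^{i+1}$ closes the gap exactly. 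What your approach buys is a check of internal consistency: it shows that the Djordjevi\'c--Wei formula is literally a corollary of Guo's more general one already quoted as Lemma 2.2, so the paper needs only one external black box rather than two. What it costs is that your proof is only as self-contained as Lemma 2.2 itself (which the paper also merely cites), and it is slightly longer than the citation; but as a verification it is complete and accurate.
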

\begin{proof} See ~\cite[Theorem 2.3]{DW}.\end{proof}

We come now to prove the main result of this section, which is an extension of ~\cite[Theorem 2.3]{Zhang} for block-operator matrices.

\begin{thm} Let $E,F,EF^{\pi}\in \mathcal{B}(X)^d$. If $EFEF^{\pi}=0$ and $F^2EF^{\pi}=0$, then $M=\left(
  \begin{array}{cc}
    E&I\\
    F&0
  \end{array}
\right)$ has g-Drazin inverse. In this case, $$M^d={\small\left(
\begin{array}{cc}
\varepsilon&[\zeta-(\alpha\zeta+\beta\theta)]\delta^d+\sum\limits_{i=1}^{\infty}[\zeta_{i}(1-\gamma\zeta)-\varepsilon_{i}(\alpha\zeta+\beta\theta)](\delta^d)^{i+1}\\
\eta&[\theta+(1-\gamma\zeta)]\delta^d+\sum\limits_{i=1}^{\infty}[\theta_{i}(1-\gamma\zeta)-\eta_{i}(\alpha\zeta+\beta\theta)](\delta^d)^{i+1}
\end{array}
\right)},$$ where
$$\begin{array}{lll}
\alpha&=&EF^{\pi}, \beta=F^{\pi}EFF^d+F^{\pi},\gamma=FF^{\pi},\\
\delta^d&=&F^d+FF^d-FF^dEF^d;
\end{array}$$
$$\begin{array}{lll}
\varepsilon&=&(\alpha\Lambda+\Gamma)\Lambda+(\alpha\Sigma+\Delta)\Gamma,\\
\zeta &=& (\alpha\Lambda+\Gamma)\Sigma\beta+(\alpha\Sigma+\Delta)\Delta\beta,\\
\eta&=&\gamma\Lambda^2+\gamma\Sigma\Gamma,\\
\theta &=&\gamma\Lambda\Sigma\beta+\gamma\Sigma\Delta\beta;
\end{array}$$
$$\begin{array}{lll}
\varepsilon_{n+1}&=&\alpha\varepsilon_n+\beta\eta_n, \varepsilon_{1}=\varepsilon,\\
\zeta_{n+1}&=&\alpha\zeta_n+\beta\theta_n,\zeta_{1}=\zeta,\\
\eta_{n+1}&=&\gamma\varepsilon_n,\eta_{1}=\eta,\\
\theta_{n+1}&=&\gamma\theta_n,\theta_{1}=\theta;
\end{array}$$
$$\begin{array}{lll}
\Lambda&=&\sum\limits_{i=0}^{\infty}[1+\beta\gamma(\alpha^d)^2](\alpha^d)^{2i+1}(\beta\gamma)^i,\\
\Sigma&=&\sum\limits_{i=0}^{\infty}[1+\beta\gamma(\alpha^d)^2](\alpha^d)^{2i+2}(\beta\gamma)^i,\\
\Gamma&=&\sum\limits_{i=0}^{\infty}\beta\gamma(\alpha^d)^{2i+2}(\beta\gamma)^i,\\
\Delta&=&\sum\limits_{i=0}^{\infty}\beta\gamma(\alpha^d)^{2i+3}(\beta\gamma)^i.
\end{array}$$
\end{thm}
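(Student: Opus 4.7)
The plan is to split $M = M_1 + M_2$ using the idempotent $q = \left(\begin{array}{cc}FF^d&0\\0&FF^d\end{array}\right)$, with $M_1 = M(1-q)$ and $M_2 = Mq$, and then apply the sum formula of Lemma 2.4. The essential preparatory step is algebraic: left-multiplying $F^2EF^\pi = 0$ twice by $F^d$ and using $F^dF = FF^d$ gives $FF^dEF^\pi = 0$, whence $F^\pi EF^\pi = EF^\pi = \alpha$. Combined with $EFEF^\pi = 0$, this yields $\alpha\gamma\alpha = 0$ and $\gamma^2\alpha = 0$ (with $\gamma = FF^\pi$), and these same identities — together with $FF^d \cdot FF^\pi = 0$ — deliver $M_2 M_1 = 0$ block by block.

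To obtain $M_1^d$, note that $M_1 = \left(\begin{array}{cc}\alpha&F^\pi\\\gamma&0\end{array}\right)$ is supported entirely in the corner $(1-q)\mathcal{B}(X\oplus X)(1-q)$, whose identity is $1-q$. In this corner $\gamma$ is quasinilpotent, so $\gamma^d = 0$ and $\gamma^\pi = 1-q$; and the hypotheses of Theorem 2.3 with $(E,F) \mapsto (\alpha,\gamma)$ are precisely $\alpha\gamma\alpha = 0$ and $\gamma^2\alpha = 0$, which we have verified. Setting $\gamma^d = 0$ in Theorem 2.3 kills all but the single surviving series in each of the five-term expressions for $\Lambda,\Sigma,\Gamma,\Delta$; since $\beta\gamma = FF^\pi = \gamma$, the resulting formulas for $M_1^d$ agree with those in the statement. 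For $M_2^d$, split once more: $M_2 = M_2' + M_2''$ with $M_2' = qM_2$ and $M_2'' = (1-q)M_2$. Because $M_2(1-q) = 0$, both $(M_2'')^2$ and $M_2' M_2''$ vanish, so Lemma 2.2 (with $P = M_2'$, $Q = M_2''$, and $Q^d = 0$) yields $M_2^d = M_2(M_2'^d)^2 + M_2(M_2'^d)^3 M_2''$. In the $q$-corner, $M_2'$ is invertible with inverse $\left(\begin{array}{cc}0&F^d\\FF^d&-FF^dEF^d\end{array}\right)$, and the quantity $\delta^d = F^d + FF^d - FF^dEF^d$ emerges when these powers are assembled.

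Substituting both into Lemma 2.4 gives $M^d = \sum_{i\geq 0} M_1^iM_1^\pi(M_2^d)^{i+1} + \sum_{i\geq 0}(M_1^d)^{i+1}M_2^iM_2^\pi$. The recursive sequences $\varepsilon_n, \zeta_n, \eta_n, \theta_n$ are exactly the block entries of iterated products of $M_1$ applied to the initial blocks coming from the $M_2^d$-factor: the recursion $\varepsilon_{n+1} = \alpha\varepsilon_n + \beta\eta_n$, $\eta_{n+1} = \gamma\varepsilon_n$ is left-multiplication by $M_1$ at the block level, and $\beta = F^\pi EFF^d + F^\pi$ appears because the $M_2^d$-factor couples the $FF^d$-information into the second coordinate of the $(1-q)$-corner through the $F^\pi EFF^d$ entry of $M_2''$. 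Collating the four block entries and using the telescoping $(\delta^d)^{i+1}$ coming from $(M_2^d)^{i+1}$ then yields the explicit formulas in the statement. The hardest step will be this final assembly — verifying that the two series in Lemma 2.4 combine cleanly with the correct placement of $(\delta^d)^{i+1}$, the recursive coefficients, and the correction terms $(1-\gamma\zeta)$ and $(\alpha\zeta+\beta\theta)$.
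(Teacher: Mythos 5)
Your splitting is genuinely different from the paper's, and most of it checks out. The paper uses the Pierce idempotent $p=\left(\begin{smallmatrix}F^{\pi}&0\\0&0\end{smallmatrix}\right)$ (only the first coordinate), takes $P=(1-p)M(1-p)$ and $Q=M-P$, so that $Q=\left(\begin{smallmatrix}\alpha&\beta\\ \gamma&0\end{smallmatrix}\right)_p$ with $\beta=F^{\pi}EFF^d+F^{\pi}$, and must then invoke Cline's formula to pass from $\left(\begin{smallmatrix}\alpha&1\\ \beta\gamma&0\end{smallmatrix}\right)$, to which Theorem 2.3 applies, to $Q^d$. Your $q=\left(\begin{smallmatrix}FF^d&0\\0&FF^d\end{smallmatrix}\right)$, $M_1=M(1-q)$, $M_2=Mq$ avoids Cline's formula entirely, since $M_1=\left(\begin{smallmatrix}\alpha&F^{\pi}\\ \gamma&0\end{smallmatrix}\right)$ already has the anti-triangular shape required by Theorem 2.3 inside the corner $F^{\pi}\mathcal{B}(X)F^{\pi}$. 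Your verifications are correct: $FF^dEF^{\pi}=0$ gives $qM(1-q)=0$, hence $M_2M_1=0$; $\alpha\gamma\alpha=0$ and $\gamma^2\alpha=0$ hold; $M_2''=(1-q)M_2$ is square-zero; and $M_2'=qMq$ is invertible in the $q$-corner with the inverse you wrote. So your route does prove existence of $M^d$ and yields a closed form.

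The gap is in the final assembly, and it is not mere bookkeeping. Writing $R=\left(\begin{smallmatrix}F^{\pi}EFF^d&0\\0&0\end{smallmatrix}\right)$, one has $Q=M_1+R$ and $P=M_2-R$: you have moved the coupling term $F^{\pi}EFF^d$ out of the quasinilpotent summand and into the invertible one. The stated recursion $\varepsilon_{n+1}=\alpha\varepsilon_n+\beta\eta_n$ is left multiplication by the paper's $Q$, whose relevant entry is $\beta$; left multiplication by your $M_1$ gives $\varepsilon_{n+1}=\alpha\varepsilon_n+F^{\pi}\eta_n$ instead, so your assertion that this recursion "is left-multiplication by $M_1$ at the block level" is false, and the explanation that $\beta$ is injected by the $M_2^d$-factor cannot repair it, since $\beta$ sits inside the recursion itself and not only in the initial data. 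Because $M_1^i\neq Q^i$ and $M_1^{\pi}\neq Q^{\pi}$, your series $\sum_{i\ge 1}M_1^iM_1^{\pi}(M_2^d)^{i+1}$ does not reproduce the stated terms $[\zeta_i(1-\gamma\zeta)-\varepsilon_i(\alpha\zeta+\beta\theta)](\delta^d)^{i+1}$ term by term; the two expressions agree only in total, by uniqueness of the g-Drazin inverse. To finish along your route you must either prove that nontrivial identity between the two series, or reinstate $R$ in the quasinilpotent summand --- i.e., work with the paper's $Q$ --- at the cost of needing Cline's formula exactly as the paper does.
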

\begin{proof} Let $p=\left(
\begin{array}{cc}
F^{\pi}&0\\
0&0
\end{array}
\right).$ Then $M=\left(
\begin{array}{cc}
\alpha&\beta\\
\gamma&\delta
\end{array}
\right)_p,$ where
$$\begin{array}{rll}
\alpha&=&\left(
\begin{array}{cc}
EF^{\pi}&0\\
0&0
\end{array}
\right), \beta=\left(
\begin{array}{cc}
F^{\pi}EFF^d&F^{\pi}\\
0&0
\end{array}
\right),\\
\gamma&=&\left(
\begin{array}{cc}
0&0\\
FF^{\pi}&0
\end{array}
\right), \delta=\left(
\begin{array}{cc}
FF^dE&FF^d\\
F^2F^d&0
\end{array}
\right).
\end{array}$$
Then $M=P+Q$, where $P=\left(
\begin{array}{cc}
0&0\\
0&\delta
\end{array}
\right), Q=\left(
\begin{array}{cc}
\alpha&\beta\\
\gamma&0
\end{array}
\right).$
Since $F^2EF^{\pi}=0$, we have $FF^dEF^{\pi}=(F^d)^2F^2EF^{\pi}=0$. By hypothesis, $E,EF^{\pi}$ have g-Drazin inverses.
In view of~\cite[Lemma 2.2]{Zhang}, $FF^dE$ has g-Drazin inverse and $(FF^dE)^d=FF^dE^d$. By using ~\cite[Lemma 2.2]{Zhang} again, $EF^{\pi}$ has g-Drazin inverse and $(EF^{\pi})^d=E^dF^{\pi}$. Moreover, $FF^dE^dF^{\pi}=FF^d(EF^{\pi})^d=(F^d)^2F^2EF^{\pi}[(EF^{\pi})^d]^2=0$, and then
$F^{\pi}E^dF^{\pi}=E^dF^{\pi}$. Hence, $\alpha$ has g-Drazin inverse and $$\alpha^d=\left(
\begin{array}{cc}
F^{\pi}E^dF^{\pi}&0\\
0&0
\end{array}
\right), \alpha^{\pi}=\left(
\begin{array}{cc}
F^{\pi}E^{\pi}F^{\pi}&0\\
0&0
\end{array}
\right).$$ One easily checks that
$\beta\gamma=\left(
\begin{array}{cc}
FF^{\pi}&0\\
0&0
\end{array}
\right), (\beta\gamma)^d=0.$ Obviously, we have
$$\begin{array}{rll}
\delta^d&=&\left(
\begin{array}{cc}
0&F^d\\
FF^d&-FF^dEF^d
\end{array}
\right), \delta^{\pi}=\left(
\begin{array}{cc}
0&0\\
0&F^{\pi}
\end{array}
\right);\\
P^d&=&\left(
\begin{array}{cc}
0&0\\
0&\delta^d
\end{array}
\right), P^{\pi}=\left(
\begin{array}{cc}
p&0\\
0&\delta^{\pi}
\end{array}
\right),P^iP^{\pi}=0~(i\geq 1).
\end{array}$$
We compute that $\alpha (\beta\gamma)\alpha=0$ and $(\beta\gamma)^2\alpha =0.$ According to Theorem 2.3,
$\left(
\begin{array}{cc}
\alpha&1\\
\beta\gamma&0
\end{array}
\right)$ has g-Drazin inverse and
$\left(
\begin{array}{cc}
\alpha&1\\
\beta\gamma&0
\end{array}
\right)^d=\left(
\begin{array}{cc}
\Lambda&\Sigma\\
\Gamma&\Delta
\end{array}
\right),$ where
$$\begin{array}{lll}
\Lambda&=&\alpha (\beta\gamma)^d+\beta\gamma\alpha\alpha^{\pi}[(\beta\gamma)^d]^2-\alpha^2\alpha^d(\beta\gamma)^d-\beta\gamma\alpha^d(\beta\gamma)^d\\
&+&\sum\limits_{i=0}^{\infty}[I+\beta\gamma(\alpha^d)^2](\alpha^d)^{2i+1}(\beta\gamma)^i(\beta\gamma)^{\pi}\\
&+&\sum\limits_{i=0}^{\infty}\alpha^{2i+3}\alpha^{\pi}[(\beta\gamma)^d]^{i+2}+\sum\limits_{i=1}^{\infty} \beta\gamma\alpha^{2i+1}\alpha^{\pi}[(\beta\gamma)^d]^{i+2},\\
\Sigma&=&(\beta\gamma)^d-\beta\gamma\alpha\alpha^d[(\beta\gamma)^d]^2-\alpha\alpha^d(\beta\gamma)^d-\beta\gamma(\alpha^d)^2(\beta\gamma)^d\\
&+&\sum\limits_{i=0}^{\infty}[1+\beta\gamma(\alpha^d)^2](\alpha^d)^{2i+2}(\beta\gamma)^i(\beta\gamma)^{\pi}\\
&+&\sum\limits_{i=0}^{\infty}\alpha^{2i+2}\alpha^{\pi}[(\beta\gamma)^d]^{i+2}+\sum\limits_{i=1}^{\infty}\beta\gamma\alpha^{2i}\alpha^{\pi}[(\beta\gamma)^d]^{i+2},\\
\Gamma&=&\beta\gamma (\beta\gamma)^d-\beta\gamma\alpha\alpha^d(\beta\gamma)^d+\sum\limits_{i=0}^{\infty}\beta\gamma(\alpha^d)^{2i+2}(\beta\gamma)^i(\beta\gamma)^{\pi}\\
&+&\sum\limits_{i=0}^{\infty}\beta\gamma\alpha^{2i+2}\alpha^{\pi}[(\beta\gamma)^d]^{i+2},\\
\Delta&=&-\beta\gamma\alpha^d(\beta\gamma)^d+\sum\limits_{i=0}^{\infty}\beta\gamma(\alpha^d)^{2i+3}(\beta\gamma)^i(\beta\gamma)^{\pi}\\
&+&\sum\limits_{i=0}^{\infty}\beta\gamma\alpha^{2i+1}\alpha^{\pi}[(\beta\gamma)^d]^{i+2}.
\end{array}$$
Thus, we derive
$$\begin{array}{lll}
\Lambda&=&\sum\limits_{i=0}^{\infty}[1+\beta\gamma(\alpha^d)^2](\alpha^d)^{2i+1}(\beta\gamma)^i,\\
\Sigma&=&\sum\limits_{i=0}^{\infty}[1+\beta\gamma(\alpha^d)^2](\alpha^d)^{2i+2}(\beta\gamma)^i,\\
\Gamma&=&\sum\limits_{i=0}^{\infty}\beta\gamma(\alpha^d)^{2i+2}(\beta\gamma)^i,\\
\Delta&=&\sum\limits_{i=0}^{\infty}\beta\gamma(\alpha^d)^{2i+3}(\beta\gamma)^i.
\end{array}$$
We compute that
$$\begin{array}{ll}
&(1+\beta\gamma(\alpha^d)^2)(\alpha^d)^{2i+1}(\beta\gamma)^i\\
=&\left(
\begin{array}{cc}
I+F(F^{\pi}E^dF^{\pi})^2&0\\
0&I
\end{array}
\right)\left(
\begin{array}{cc}
(F^{\pi}E^dF^{\pi})^{2i+1}&0\\
0&0
\end{array}
\right)\left(
\begin{array}{cc}
F^iF^{\pi}&0\\
0&0
\end{array}
\right)\\
=&\left(
\begin{array}{cc}
[I+F(F^{\pi}E^dF^{\pi})^2](F^{\pi}E^dF^{\pi})^{2i+1}F^i&0\\
0&0
\end{array}
\right),\\
\end{array}$$
$$\begin{array}{ll}
&(1+\beta\gamma(\alpha^d)^2)(\alpha^d)^{2i+2}(\beta\gamma)^i\\
=&\left(
\begin{array}{cc}
I+F(F^{\pi}E^dF^{\pi})^2&0\\
0&I
\end{array}
\right)\left(
\begin{array}{cc}
(F^{\pi}E^dF^{\pi})^{2i+2}&0\\
0&0
\end{array}
\right)\left(
\begin{array}{cc}
F^iF^{\pi}&0\\
0&0
\end{array}
\right)\\
=&\left(
\begin{array}{cc}
[I+F(F^{\pi}E^dF^{\pi})^2](F^{\pi}E^dF^{\pi})^{2i+2}F^i&0\\
0&0
\end{array}
\right),\\
&\beta\gamma(\alpha^d)^{2i+2}(\beta\gamma)^i\\
=&\left(
\begin{array}{cc}
FF^{\pi}&0\\
0&0
\end{array}
\right)\left(
\begin{array}{cc}
(F^{\pi}E^dF^{\pi})^{2i+2}&0\\
0&0
\end{array}
\right)\left(
\begin{array}{cc}
F^iF^{\pi}&0\\
0&0
\end{array}
\right)\\
=&\left(
\begin{array}{cc}
F(F^{\pi}E^dF^{\pi})^{2i+2}F^iF^{\pi}&0\\
0&0
\end{array}
\right),\\
&\beta\gamma(\alpha^d)^{2i+3}(\beta\gamma)^i\\
=&\left(
\begin{array}{cc}
FF^{\pi}&0\\
0&0
\end{array}
\right)\left(
\begin{array}{cc}
(F^{\pi}E^dF^{\pi})^{2i+3}&0\\
0&0
\end{array}
\right)\left(
\begin{array}{cc}
F^iF^{\pi}&0\\
0&0
\end{array}
\right)\\
=&\left(
\begin{array}{cc}
F(F^{\pi}E^dF^{\pi})^{2i+3}F^iF^{\pi}&0\\
0&0
\end{array}
\right).
\end{array}$$
We verify that
$$\begin{array}{c}
\left(
\begin{array}{cc}
\alpha&\beta\\
\gamma&0
\end{array}
\right)=\left(
\begin{array}{cc}
\alpha&1\\
\gamma&0
\end{array}
\right)
\left(
\begin{array}{cc}
1&0\\
0&\beta
\end{array}
\right),\\
\left(
\begin{array}{cc}
\alpha&1\\
\beta\gamma&0
\end{array}
\right)=\left(
\begin{array}{cc}
1&0\\
0&\beta
\end{array}
\right)
\left(
\begin{array}{cc}
\alpha&1\\
\gamma&0
\end{array}
\right).
\end{array}$$
Therefore it follows by Cline's formula (see~\cite[Theorem 2.2]{L}) that
$$\begin{array}{lll}
Q^d&=&\left(
\begin{array}{cc}
\alpha&1\\
\gamma&0
\end{array}
\right)\left(
\begin{array}{cc}
\Lambda&\Sigma\\
\Gamma&\Delta
\end{array}
\right)^2
\left(
\begin{array}{cc}
1&0\\
0&\beta
\end{array}
\right)\\
&=&\left(
\begin{array}{cc}
\alpha\Lambda+\Gamma&\alpha\Sigma+\Delta\\
\gamma\Lambda&\gamma\Sigma
\end{array}
\right)\left(
\begin{array}{cc}
\Lambda&\Sigma\beta\\
\Gamma&\Delta\beta
\end{array}
\right)\\
&=&\left(
\begin{array}{cc}
\varepsilon &\zeta\\
\eta &\theta
\end{array}
\right),
\end{array}$$
where $$\begin{array}{lll}
\varepsilon&=&(\alpha\Lambda+\Gamma)\Lambda+(\alpha\Sigma+\Delta)\Gamma,\\
\zeta &=& (\alpha\Lambda+\Gamma)\Sigma\beta+(\alpha\Sigma+\Delta)\Delta\beta,\\
\eta&=&\gamma\Lambda^2+\gamma\Sigma\Gamma,\\
\theta &=&\gamma\Lambda\Sigma\beta+\gamma\Sigma\Delta\beta .
\end{array}$$ Moreover, we have $$\begin{array}{lll}
Q^{\pi}&=&\left(
\begin{array}{cc}
p&0\\
0&1-p
\end{array}
\right)-\left(
\begin{array}{cc}
\alpha&\beta\\
\gamma&0
\end{array}
\right)\left(
\begin{array}{cc}
\varepsilon &\zeta\\
\eta &\theta
\end{array}
\right)\\
&=&\left(
\begin{array}{cc}
p-\alpha\varepsilon-\beta\eta&-\alpha\zeta-\beta\theta\\
-\gamma\varepsilon &1-p-\gamma\zeta
\end{array}
\right).
\end{array}$$
Write $Q^n=\left(
\begin{array}{cc}
\varepsilon_n &\zeta_n\\
\eta_n &\theta_n
\end{array}
\right).$ Then $$\begin{array}{lll}
\varepsilon_{n+1}&=&\alpha\varepsilon_n+\beta\eta_n,\\
\zeta_{n+1}&=&\alpha\zeta_n+\beta\theta_n,\\
\eta_{n+1}&=&\gamma\varepsilon_n,\\
\theta_{n+1}&=&\gamma\theta_n.
\end{array}$$
$$\begin{array}{ll}
&Q^iQ^{\pi}(P^d)^{i+1}\\
=&\left(
\begin{array}{cc}
\varepsilon_{i} &\zeta_{i}\\
\eta_{i}&\theta_{i}
\end{array}
\right)\left(
\begin{array}{cc}
p-\alpha\varepsilon-\beta\eta&-\alpha\zeta-\beta\theta\\
-\gamma\varepsilon &1-p-\gamma\zeta
\end{array}
\right)\left(
\begin{array}{cc}
0&0\\
0&(\delta^d)^{i+1}
\end{array}
\right)\\
=&\left(
\begin{array}{cc}
\varepsilon_{i} &\zeta_{i}\\
\eta_{i}&\theta_{i}
\end{array}
\right)\left(
\begin{array}{cc}
0&-(\alpha\zeta+\beta\theta)(\delta^d)^{i+1}\\
0&(1-p-\gamma\zeta)(\delta^d)^{i+1}
\end{array}
\right)\\
=&\left(
\begin{array}{cc}
0&\zeta_{i}(1-p-\gamma\zeta)(\delta^d)^{i+1}-\varepsilon_{i}(\alpha\zeta+\beta\theta)(\delta^d)^{i+1}\\
0&\theta_{i}(1-p-\gamma\zeta)(\delta^d)^{i+1}-\eta_{i}(\alpha\zeta+\beta\theta)(\delta^d)^{i+1}
\end{array}
\right).
\end{array}$$
Since $\delta\gamma=0$, we have $PQ=0$. In light of Lemma 2.4,
$$\begin{array}{lll}
M^d&=&\sum\limits_{i=0}^{\infty}Q^iQ^{\pi}(P^d)^{i+1}+\sum\limits_{i=0}^{\infty}(Q^d)^{i+1}P^iP^{\pi}\\
&=&Q^dP^{\pi}+Q^{\pi}P^d+\sum\limits_{i=1}^{\infty}Q^iQ^{\pi}(P^d)^{i+1}\\
&=&\left(
\begin{array}{cc}
\varepsilon&\zeta\delta^d\\
\eta&\theta\delta^d
\end{array}
\right)+\left(
\begin{array}{cc}
0&-(\alpha\zeta+\beta\theta)\delta^d\\
0&(1-p-\gamma\zeta)\delta^d
\end{array}
\right)\\
&+&\left(
\begin{array}{cc}
0&\sum\limits_{i=1}^{\infty}[\zeta_{i}(1-p-\gamma\zeta)-\varepsilon_{i}(\alpha\zeta+\beta\theta)](\delta^d)^{i+1}\\
0&\sum\limits_{i=1}^{\infty}[\theta_{i}(1-p-\gamma\zeta)-\eta_{i}(\alpha\zeta+\beta\theta)](\delta^d)^{i+1}
\end{array}
\right)\\
\end{array}$$
$$\begin{array}{l}
{\small =\left(
\begin{array}{cc}
\varepsilon&[\zeta-(\alpha\zeta+\beta\theta)]\delta^d+\sum\limits_{i=1}^{\infty}[\zeta_{i}(1-\gamma\zeta)-\varepsilon_{i}(\alpha\zeta+\beta\theta)](\delta^d)^{i+1}\\
\eta&[\theta+(1-\gamma\zeta)]\delta^d+\sum\limits_{i=1}^{\infty}[\theta_{i}(1-\gamma\zeta)-\eta_{i}(\alpha\zeta+\beta\theta)](\delta^d)^{i+1}
\end{array}
\right).}\end{array}
$$ as required.\end{proof}

\begin{cor} Let $E,F,EF^{\pi}\in \mathcal{B}(X)^d$. If $EFEF^{\pi}=0$ and $F^2EF^{\pi}=0$, then $M=\left(
  \begin{array}{cc}
    E&F\\
    I&0
  \end{array}
\right)$ has g-Drazin inverse. In this case, $$\begin{array}{l}
M^d=\left(
  \begin{array}{cc}
    E&I\\
    I&0
  \end{array}
\right)\\
\left(
\begin{array}{cc}
\varepsilon&[\zeta-(\alpha\zeta+\beta\theta)]\delta^d+\sum\limits_{i=1}^{\infty}[\zeta_{i}(1-\gamma\zeta)-\varepsilon_{i}(\alpha\zeta+\beta\theta)](\delta^d)^{i+1}\\
\eta&[\theta+(1-\gamma\zeta)]\delta^d+\sum\limits_{i=1}^{\infty}[\theta_{i}(1-\gamma\zeta)-\eta_{i}(\alpha\zeta+\beta\theta)](\delta^d)^{i+1}
\end{array}
\right)^2\\
\left(
  \begin{array}{cc}
    I&0\\
    0&F
  \end{array}
\right),
\end{array}$$ where $\alpha,\beta,\gamma,\delta,\varepsilon,\zeta,\eta,\theta,\varepsilon_{n},\zeta_{n}, \eta_{n}, \theta_{n}$ and
$\Lambda,\Sigma,\Gamma,\Delta $ are given as in Theorem 2.5.\end{cor}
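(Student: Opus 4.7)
The natural plan is to reduce this corollary to Theorem 2.5 via a factorization together with Cline's formula, which is already used inside the proof of Theorem 2.5 itself. Set
$$U=\left(\begin{array}{cc} E & I\\ I & 0\end{array}\right), \qquad V=\left(\begin{array}{cc} I & 0\\ 0 & F\end{array}\right).$$
A direct block multiplication shows that $UV=\left(\begin{smallmatrix} E & F\\ I & 0\end{smallmatrix}\right)=M$, whereas $VU=\left(\begin{smallmatrix} E & I\\ F & 0\end{smallmatrix}\right)$ is precisely the anti-triangular matrix to which Theorem 2.5 applies. Thus, under the hypotheses $EFEF^{\pi}=0$ and $F^{2}EF^{\pi}=0$, Theorem 2.5 guarantees that $VU$ has a g-Drazin inverse and furnishes the explicit formula that appears as the middle factor in the statement of the corollary.

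From here I would invoke Cline's formula in the g-Drazin setting (the paper cites \cite[Theorem 2.2]{L}): if $VU$ has g-Drazin inverse, then so does $UV$, and moreover
$$(UV)^{d}=U\bigl[(VU)^{d}\bigr]^{2}V.$$
Substituting the three factors $U$, $[(VU)^d]^2$, and $V$ produces the expression claimed in the corollary, since $(VU)^d$ is exactly the $2\times 2$ block matrix with entries $\varepsilon,\,[\zeta-(\alpha\zeta+\beta\theta)]\delta^{d}+\cdots$ etc. No further case analysis is needed because the Cline-type identity transports both the existence of the g-Drazin inverse and its representation.

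The only genuinely nontrivial point is checking that Cline's formula may indeed be applied: one must verify that the cited theorem works for g-Drazin inverses of bounded operators on a Banach space (not just Drazin inverses on a ring), and that no additional commutativity hypotheses beyond $M=UV$, $N=VU$ are needed. That is precisely the content of \cite[Theorem 2.2]{L}, so the application is legitimate. The remainder of the proof consists merely of displaying the identity $M^{d}=U[(VU)^{d}]^{2}V$ with the factors written out explicitly, which is purely a matter of substitution and requires no further computation.
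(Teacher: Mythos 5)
Your proposal is correct and is essentially identical to the paper's own proof: both factor $\left(\begin{smallmatrix} E&I\\ F&0\end{smallmatrix}\right)=VU$ with $U=\left(\begin{smallmatrix} E&I\\ I&0\end{smallmatrix}\right)$, $V=\left(\begin{smallmatrix} I&0\\ 0&F\end{smallmatrix}\right)$, apply Theorem 2.5 to $VU$, and transport existence and the representation to $M=UV$ via Cline's formula $(UV)^d=U[(VU)^d]^2V$ from \cite[Theorem 2.2]{L}. No gaps.
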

\begin{proof} In view of Theorem 2.5, the block operator matrix $\left(
  \begin{array}{cc}
    E&I\\
    F&0
  \end{array}
\right)$ has g-Drazin inverse. We easily see that $$\left(
  \begin{array}{cc}
    E&I\\
    F&0
  \end{array}
\right)=\left(
  \begin{array}{cc}
    I&0\\
    0&F
  \end{array}
\right)\left(
  \begin{array}{cc}
    E&I\\
    I&0
  \end{array}
\right),$$ it follows by Cline's formula that $\left(
  \begin{array}{cc}
    E&I\\
    I&0
  \end{array}
\right)\left(
  \begin{array}{cc}
    I&0\\
    0&F
  \end{array}
\right)$ has g-Drazin inverse. That is, $\left(
  \begin{array}{cc}
    E&F\\
    I&0
  \end{array}
\right)$ has g-Drazin inverse. Moreover, we have
$$\left(
  \begin{array}{cc}
    E&F\\
    I&0
  \end{array}
\right)^d=\left(
  \begin{array}{cc}
    E&I\\
    I&0
  \end{array}
\right)[\left(
  \begin{array}{cc}
    E&I\\
    F&0
  \end{array}
\right)^d]^2\left(
  \begin{array}{cc}
    I&0\\
    0&F
  \end{array}
\right),$$ and so the proof is complete by Theorem 2.5.\end{proof}

We note that the corresponding facts of the preceding lemmas in this section are valid for Drazin inverse. Then by using the most of the technicalities that occur in the proof of Theorem 2.5, we have

\begin{thm} Let $E,F,EF^{\pi}\in \mathcal{B}(X)^D$. If $EFEF^{\pi}=0$ and $F^2EF^{\pi}=0$, then $M=\left(
  \begin{array}{cc}
    E&I\\
    F&0
  \end{array}
\right)$ has Drazin inverse. In this case, $$M^D={\small\left(
\begin{array}{cc}
\varepsilon&[\zeta-(\alpha\zeta+\beta\theta)]\delta^D+\sum\limits_{i=1}^{k}[\zeta_{i}(1-\gamma\zeta)-\varepsilon_{i}(\alpha\zeta+\beta\theta)](\delta^D)^{i+1}\\
\eta&[\theta+(1-\gamma\zeta)]\delta^D+\sum\limits_{i=1}^{k}[\theta_{i}(1-\gamma\zeta)-\eta_{i}(\alpha\zeta+\beta\theta)](\delta^D)^{i+1}
\end{array}
\right)},$$ where
$$\begin{array}{lll}
\alpha&=&EF^{\pi}, \beta=F^{\pi}EFF^D+F^{\pi},\gamma=FF^{\pi},\\
\delta^D&=&F^D+FF^D-FF^DEF^D;
\end{array}$$
$$\begin{array}{lll}
\varepsilon&=&(\alpha\Lambda+\Gamma)\Lambda+(\alpha\Sigma+\Delta)\Gamma,\\
\zeta &=& (\alpha\Lambda+\Gamma)\Sigma\beta+(\alpha\Sigma+\Delta)\Delta\beta,\\
\eta&=&\gamma\Lambda^2+\gamma\Sigma\Gamma,\\
\theta &=&\gamma\Lambda\Sigma\beta+\gamma\Sigma\Delta\beta;
\end{array}$$
$$\begin{array}{lll}
\varepsilon_{n+1}&=&\alpha\varepsilon_n+\beta\eta_n, \varepsilon_{1}=\varepsilon,\\
\zeta_{n+1}&=&\alpha\zeta_n+\beta\theta_n,\zeta_{1}=\zeta,\\
\eta_{n+1}&=&\gamma\varepsilon_n,\eta_{1}=\eta,\\
\theta_{n+1}&=&\gamma\theta_n,\theta_{1}=\theta;
\end{array}$$
$$\begin{array}{lll}
\Lambda&=&\sum\limits_{i=0}^{m}[1+\beta\gamma(\alpha^D)^2](\alpha^D)^{2i+1}(\beta\gamma)^i,\\
\Sigma&=&\sum\limits_{i=0}^{m}[1+\beta\gamma(\alpha^D)^2](\alpha^D)^{2i+2}(\beta\gamma)^i,\\
\Gamma&=&\sum\limits_{i=0}^{m}\beta\gamma(\alpha^D)^{2i+2}(\beta\gamma)^i,\\
\Delta&=&\sum\limits_{i=0}^{m}\beta\gamma(\alpha^D)^{2i+3}(\beta\gamma)^i,
\end{array}$$ where $k=i(EF^{\pi})+2i(F), m=i(F)$.
\end{thm}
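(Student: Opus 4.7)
The plan is to transcribe the proof of Theorem 2.5 verbatim, replacing every g-Drazin inverse by the corresponding Drazin inverse and tracking the finite indices that force the infinite series to truncate. The key observation is that Lemmas 2.1, 2.2, 2.4 and Theorem 2.3 all admit literal Drazin analogues: the very same formulas are valid, except that the series for the Drazin inverse of a sum or of a triangular block matrix terminate once the powers exceed the relevant Drazin indices. Under the hypothesis $E, F, EF^{\pi} \in \mathcal{B}(X)^{D}$, the indices $i(F)$ and $i(EF^{\pi})$ are finite and will control the length of every sum that appears.

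First I would set $p = \bigl(\begin{smallmatrix} F^{\pi} & 0 \\ 0 & 0 \end{smallmatrix}\bigr)$ and decompose $M = P + Q$ exactly as in the proof of Theorem 2.5, obtaining the same expressions for $\alpha,\beta,\gamma,\delta$. Using $EFEF^{\pi}=0$ and $F^2EF^{\pi}=0$ together with the Drazin version of \cite[Lemma 2.2]{Zhang}, one checks that $\alpha = EF^{\pi}$ is Drazin invertible with $\alpha^{D} = F^{\pi}E^{D}F^{\pi}$ and $i(\alpha) \le i(EF^{\pi})$, while $\delta$ is Drazin invertible with $\delta^{D}$ as stated and $i(\delta) \le i(F)$. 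The matrix $\beta\gamma$ has the block form $\bigl(\begin{smallmatrix} FF^{\pi} & 0 \\ 0 & 0 \end{smallmatrix}\bigr)$, hence is nilpotent with $(\beta\gamma)^{i(F)+1}=0$ because $F^{i(F)+1}F^{\pi}=0$.

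Next I would apply the Drazin analogue of Theorem 2.3 to the matrix $\bigl(\begin{smallmatrix} \alpha & 1 \\ \beta\gamma & 0 \end{smallmatrix}\bigr)$. The conditions $\alpha(\beta\gamma)\alpha = 0$ and $(\beta\gamma)^{2}\alpha = 0$ are immediate from the same calculation as before, and the resulting formulas for $\Lambda,\Sigma,\Gamma,\Delta$ truncate at $i = m := i(F)$ because the factors $(\beta\gamma)^{i}$ vanish beyond this index; the other three terms in the formula of Theorem 2.3 drop out exactly as in Theorem 2.5 due to $(\beta\gamma)^{D}=0$ and $\alpha^{\pi}\alpha^{k}=0$ for $k \ge i(\alpha)$. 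Applying Cline's formula as before yields $Q^{D}$ in terms of $\varepsilon,\zeta,\eta,\theta$, and the recursion for $Q^{n}$ gives $\varepsilon_{n},\zeta_{n},\eta_{n},\theta_{n}$. Finally, the Drazin analogue of Lemma 2.4 (which follows by the same proof, with sums becoming finite) gives $M^{D} = \sum_{i=0}^{k}Q^{i}Q^{\pi}(P^{D})^{i+1} + \sum_{i \ge 0}(Q^{D})^{i+1}P^{i}P^{\pi}$. Since $P^{i}P^{\pi} = 0$ for $i \ge 1$, only the $i=0$ term in the second sum survives, and assembling the pieces reproduces the stated formula for $M^{D}$.

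The main obstacle will be pinning down precisely that $k = i(EF^{\pi}) + 2i(F)$ is the correct truncation for the first sum. This requires showing $Q^{k+1}Q^{\pi} = 0$, for which I would induct on the recursion $\varepsilon_{n+1} = \alpha\varepsilon_{n} + \beta\eta_{n}$, etc. Each application of $\alpha$ loses one power of $\alpha^{\pi}$ and each application of $\beta\gamma$ (which arises implicitly through $\eta_{n},\theta_{n}$ and then recombines with $\beta$) loses one power of $(\beta\gamma)^{\pi}$; combining $i(\alpha) \le i(EF^{\pi})$ with the two factors of $\beta$ and $\gamma$ per cycle in the recursion accounts for the $2i(F)$ contribution. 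Once this bookkeeping is done, the remainder of the proof is a direct transcription of the g-Drazin calculation.
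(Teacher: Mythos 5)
Your proposal matches the paper's approach exactly: the paper offers no separate proof of this theorem, stating only that the Drazin analogues of the section's lemmas hold and that the result follows "by using most of the technicalities that occur in the proof of Theorem 2.5," which is precisely the transcription-plus-truncation argument you describe. Your attention to justifying the truncation bound $k=i(EF^{\pi})+2i(F)$ is in fact more detail than the paper itself supplies.
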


\section{group inverse of anti-triangular block matrices}

The aim of this section is to provide necessary and sufficient conditions on $E$ and $F$ so that the block operator matrix $\left(
\begin{array}{cc}
E&I\\
F&0
\end{array}
\right)$ has group inverse. We now derive

\begin{thm} Let $M=\left(
\begin{array}{cc}
E&I\\
F&0
\end{array}
\right)$ and $E,F,EF^{\pi}$ have Drazin inverses. If $FEF^{\pi}=0$, then the following are equivalent:
\end{thm}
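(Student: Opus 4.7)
The plan is to exploit the Pierce decomposition from the proof of Theorem 2.5. Set $p = \left(\begin{array}{cc} F^\pi & 0 \\ 0 & 0 \end{array}\right)$ and write $M = \left(\begin{array}{cc} \alpha & \beta \\ \gamma & \delta \end{array}\right)_p = P + Q$, where $P$ is the $\delta$-block and $Q$ is the anti-triangular part, with $\alpha, \beta, \gamma, \delta$ as in the proof of Theorem 2.5. Because the hypothesis $FEF^\pi = 0$ immediately gives both $EFEF^\pi = 0$ and $F^2 EF^\pi = 0$, Theorem 2.5 already yields an explicit $M^d$; what is left for group invertibility is to force the Drazin index of $M$ down to one, equivalently $M \cdot M^\pi = 0$ for $M^\pi = I - MM^d$, and to match this with each of the listed algebraic statements.

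Under the stronger assumption $FEF^\pi = 0$ many of the blocks from Theorem 2.5 collapse. Using $F^\pi F = F F^\pi$ one computes $\gamma\alpha = 0$ and $(\beta\gamma)\alpha = 0$; the products $\alpha(\beta\gamma)\alpha$, $\beta\gamma\alpha\alpha^d$ and their relatives vanish, and the infinite series defining $\Lambda, \Sigma, \Gamma, \Delta$ truncate to a handful of terms. Moreover, as in the proof of Theorem 2.5, one derives $\alpha^d = E^d F^\pi$ and $FF^d E^d F^\pi = 0$. In this simplified setting, the existence of $M^\#$ should reduce to a combination of group invertibility of $\alpha = EF^\pi$, a Drazin-index-one condition on $F$ (or equivalently on $\delta$), and a compatibility identity linking the two, which is precisely the shape the listed equivalent conditions are expected to take.

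For each implication, the strategy is as follows. Assuming $M^\#$ exists, I would compute $M M^\pi$ block-by-block in the Pierce form and read off the corresponding vanishing conditions on $\alpha^\pi \alpha$, $\delta^\pi \delta$ and the off-diagonal interactions, matching them term-by-term with each listed condition. For the converses, I would take the formula for $M^d$ from Theorem 2.5, specialised to the present hypotheses so that the infinite sums truncate, and verify $M^2 M^d = M$ directly, invoking Cline's formula to switch between the two factorisations of $Q$ used in that proof. The main obstacle will be the off-diagonal block of $M M^d - M^d M$: showing it vanishes requires delicately combining $FEF^\pi = 0$ with the auxiliary identity $FF^d E^d F^\pi = 0$, and the rearrangement must recover each listed equivalent condition exactly rather than merely a sufficient condition.
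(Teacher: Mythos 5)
Your proposal is an outline rather than a proof, and the gaps are exactly at the points where the work lies. First, you never identify the equivalent condition itself: the theorem asserts that $M$ has group inverse if and only if $F$ has group inverse and $E^{\pi}F^{\pi}=0$, together with an explicit formula for $M^{\#}$. Saying that the existence of $M^{\#}$ ``should reduce to'' some combination of index-one conditions ``which is precisely the shape the listed equivalent conditions are expected to take'' proves nothing; an equivalence cannot be established without writing down both sides. Second, for the necessity direction your plan is to compute $MM^{\pi}$ from the representation of $M^{d}$ in Theorem 2.5 and read off vanishing conditions block by block. That representation is a nested double family of series in $\varepsilon_i,\zeta_i,\eta_i,\theta_i$ and $\delta^{d}$, and you give no computation showing that $MM^{\pi}=0$ forces $FF^{\pi}F=F$ (group invertibility of $F$) and $E^{\pi}F^{\pi}=0$ rather than some weaker aggregate condition. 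The paper avoids this entirely: it writes $M^{\#}=(X_{ij})$, extracts $FX_{11}=0$, $EX_{11}+X_{21}=I$ and $X_{21}=FX_{12}$ directly from the group-inverse axioms, deduces $F=F^{2}X_{12}$ (using $FE=FEFF^{d}$, which is where $FEF^{\pi}=0$ enters), and invokes the regularity characterization of group invertibility; it then switches to a \emph{different} Pierce idempotent $e=\mathrm{diag}(FF^{\#},I)$ --- not your $p=\mathrm{diag}(F^{\pi},0)$ --- under which $M$ becomes block lower triangular, and applies the criterion $d^{\pi}ca^{\pi}=0$ of Mihajlovi\'c--Djordjevi\'c to land exactly on $E^{\pi}F^{\pi}=0$. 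Nothing in your plan produces the group invertibility of $F$, which is the first and unavoidable step.

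For the sufficiency direction your route could in principle work, but again nothing is executed: you have no candidate formula for $M^{\#}$, and the claimed truncation of the series requires both that $FF^{\pi}$ be nilpotent (true here because $F$ has a Drazin, not merely g-Drazin, inverse --- a distinction you do not draw) and the auxiliary identity $FE^{D}F^{\pi}=0$, which itself needs proof (the paper derives it from $(FF^{\#}E)^{d}=FF^{\#}E^{D}$ via the absorption lemma). The paper's converse is a three-line verification that the explicit matrix $N$ satisfies $MN=NM$, $M(I-MN)=0$ and $(I-MN)N=0$; your detour through Theorem 2.5 replaces this with a much heavier computation that you have not carried out. In short, the decomposition you chose is the one suited to computing $M^{d}$, not the one suited to detecting index one, and the two directions of the equivalence each require an idea (the identity $F=F^{2}X_{12}$, and the triangularization with respect to $FF^{\#}$) that is absent from the proposal.
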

\begin{enumerate}
\item [(1)]{\it $M$ has group inverse.} \vspace{-.5mm}
\item [(2)]{\it $F$ has group inverse and $E^{\pi}F^{\pi}=0$.}\\
\end{enumerate}
In this case, $$M^{\#}=\left(
\begin{array}{cc}
E^DF^{\pi}&F^{\#}+(E^DF^{\pi})^2-E^DF^{\pi}EF^{\#}\\
FF^{\#}&-FF^{\#}EF^{\#}
\end{array}
\right).$$
\begin{proof} $(1)\Rightarrow (2)$ Write $M^{\#}=\left(
\begin{array}{cc}
X_{11}&X_{12}\\
X_{21}&X_{22}
\end{array}
\right)$. Then $MM^{\#}=M^{\#}M$, and so
$$\left(
\begin{array}{cc}
E&I\\
F&0
\end{array}
\right)\left(
\begin{array}{cc}
X_{11}&X_{12}\\
X_{21}&X_{22}
\end{array}
\right)=\left(
\begin{array}{cc}
X_{11}&X_{12}\\
X_{21}&X_{22}
\end{array}
\right)\left(
\begin{array}{cc}
E&I\\
F&0
\end{array}
\right).$$ Then we have
$$\begin{array}{c}
EX_{11}+X_{21}=X_{11}E+X_{12}F,\\
FX_{12}=X_{21}.
\end{array}$$
Since $MM^{\#}M=M$, we have
$$\begin{array}{c}
EX_{11}+X_{21}=I,\\
FX_{11}=0.
\end{array}$$
Therefore $$\begin{array}{lll}
F&=&(FE)X_{11}+FX_{21}\\
&=&(FE)F^dFX_{11}+FX_{21}\\
&=&(FEF^d)(FX_{11})+FX_{21}\\
&=&F^2X_{12}.
\end{array}$$ In view of~\cite[Lemma 1.2]{Z}, $F$ has group inverse.

Let $e=
\left(
\begin{array}{cc}
FF^{\#}&0\\
0&I
\end{array}
\right)$. Then $M=
\left(
\begin{array}{cc}
a&b\\
c&d
\end{array}
\right)_e,$ where $$\begin{array}{rll}
a&=&\left(
\begin{array}{cc}
FF^{\#}E&FF^{\#}\\
F^2F^{\#}&0
\end{array}
\right),b=\left(
\begin{array}{cc}
0&0\\
FF^{\pi}&0
\end{array}
\right)=0,\\
c&=&\left(
\begin{array}{cc}
F^{\pi}EFF^{\#}&F^{\pi}\\
0&0
\end{array}
\right),d=\left(
\begin{array}{cc}
EF^{\pi}&0\\
0&0
\end{array}
\right).
\end{array}$$
As in the proof of Theorem 2.5, we prove that $(EF^{\pi})^D=E^DF^{\pi}$. Then we compute that
$$a^{\#}=\left(
\begin{array}{cc}
0&F^{\#}\\
FF^{\#}&-FF^{\#}EF^{\#}
\end{array}
\right), d^D=\left(
\begin{array}{cc}
E^DF^{\pi}&0\\
0&0
\end{array}
\right).$$ Therefore we have $$a^{\pi}=\left(
\begin{array}{cc}
0&0\\
0&F^{\pi}
\end{array}
\right), d^{\pi}=\left(
\begin{array}{cc}
E^{\pi}F^{\pi}&0\\
0&0
\end{array}
\right).$$  In view of ~\cite[Theorem 2.1]{MD}, we have $d^{\pi}ca^{\pi}=0$, and so
$$\left(
\begin{array}{cc}
E^{\pi}F^{\pi}&0\\
0&0
\end{array}
\right)\left(
\begin{array}{cc}
F^{\pi}EFF^D&F^{\pi}\\
0&0
\end{array}
\right)\left(
\begin{array}{cc}
0&0\\
0&F^{\pi}
\end{array}
\right)=\left(
\begin{array}{cc}
0&E^{\pi}F^{\pi}\\
0&0
\end{array}
\right)=0.$$ Therefore $E^{\pi}F^{\pi}=0$, as required.

$(2)\Rightarrow (1)$ Since $EF^{\pi}$ had g-Drazin inverse and $FEF^{\pi}=0$, it follows by~\cite[Lemma 2.2]{Zhang} that $FF^{\#}E$ has g-Drazin inverse and
$(FF^{\#}E)^d=FF^{\#}E^d=FF^{\#}E^D$. Then $FE^DF^{\pi}=F(FF^{\#}E^D)F^{\pi}=F(FF^{\#}E)^dF^{\pi}=F[(FF^{\#}E)^d]^2F^{\#}(FEF^{\pi})=0$.

Set $$N=\left(
\begin{array}{cc}
E^DF^{\pi}&F^{\#}+(E^DF^{\pi})^2-E^DF^{\pi}EF^{\#}\\
FF^{\#}&-FF^{\#}EF^{\#}
\end{array}
\right).$$ Then we directly check that
$$\begin{array}{rll}
MN&=&\left(
\begin{array}{cc}
E&I\\
F&0
\end{array}
\right)\left(
\begin{array}{cc}
E^DF^{\pi}&F^{\#}+(E^DF^{\pi})^2-E^DF^{\pi}EF^{\#}\\
FF^{\#}&-FF^{\#}EF^{\#}
\end{array}
\right)\\
&=&\left(
\begin{array}{cc}
EE^DF^{\pi}+FF^{\#}&FF^{\#}EF^{\#}+EE^DF^{\pi}E^DF^{\pi}\\
0&FF^{\#}
\end{array}
\right)\\
&=&\left(
\begin{array}{cc}
I&E^DF^{\pi}\\
0&FF^{\#}
\end{array}
\right)\\
&=&\left(
\begin{array}{cc}
E^DF^{\pi}EF^{\pi}+FF^{\#}&E^DF^{\pi}\\
FF^{\#}EF^{\pi}&FF^{\#}
\end{array}
\right)\\
&=&\left(
\begin{array}{cc}
E^DF^{\pi}&F^{\#}+(E^DF^{\pi})^2-E^DF^{\pi}EF^{\#}\\
FF^{\#}&-FF^{\#}EF^{\#}
\end{array}
\right)\left(
\begin{array}{cc}
E&I\\
F&0
\end{array}
\right)\\
&=&NM,\\
\end{array}$$
$$\begin{array}{rll}
M(1-MN)&=&\left(
\begin{array}{cc}
E&I\\
F&0
\end{array}
\right)\left(
\begin{array}{cc}
0&-E^DF^{\pi}\\
0&F^{\pi}
\end{array}
\right)=0,\\
\end{array}$$
$$\begin{array}{rl}
&(1-MN)N\\
=&\left(
\begin{array}{cc}
0&-E^DF^{\pi}\\
0&F^{\pi}
\end{array}
\right)\left(
\begin{array}{cc}
E^DF^{\pi}&F^{\#}+(E^DF^{\pi})^2-E^DF^{\pi}EF^{\#}\\
FF^{\#}&-FF^{\#}EF^{\#}
\end{array}
\right)\\
=&0.
\end{array}$$
Therefore $M^{\#}=N$, as asserted.\end{proof}

\begin{cor} Let $M=\left(
\begin{array}{cc}
E&F\\
I&0
\end{array}
\right)$ and $E,F,EF^{\pi}$ have Drazin inverses. If $FEF^{\pi}=0$, then the following are equivalent:
\end{cor}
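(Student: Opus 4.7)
The strategy is to reduce Corollary 3.2 to Theorem 3.1 via Cline's formula, paralleling the reduction of Corollary 2.6 to Theorem 2.5. Set $Y=\left(\begin{array}{cc} E & I \\ I & 0 \end{array}\right)$ and $X=\left(\begin{array}{cc} I & 0 \\ 0 & F \end{array}\right)$. Then $M=YX$, while $XY=M_{1}:=\left(\begin{array}{cc} E & I \\ F & 0 \end{array}\right)$ is precisely the matrix of Theorem 3.1; the hypotheses on $E,F,EF^{\pi}$ and the relation $FEF^{\pi}=0$ transfer verbatim to $M_{1}$.

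For the direction (2) $\Rightarrow$ (1), Theorem 3.1 furnishes the explicit $M_{1}^{\#}$. By Cline's formula for the Drazin inverse (see~\cite[Theorem 2.2]{L}), $M$ has Drazin inverse and $M^{D}=Y(M_{1}^{\#})^{2}X$. To promote Drazin to group, I would verify $MM^{D}M=M$; a short simplification using $M_{1}(M_{1}^{\#})^{2}=M_{1}^{\#}$ and $M_{1}^{\#}M_{1}=M_{1}M_{1}^{\#}$ turns this into the single identity $YM_{1}^{\pi}X=0$. The proof of Theorem 3.1 already computes $M_{1}M_{1}^{\#}=\left(\begin{array}{cc} I & E^{D}F^{\pi} \\ 0 & FF^{\#} \end{array}\right)$, so $M_{1}^{\pi}=\left(\begin{array}{cc} 0 & -E^{D}F^{\pi} \\ 0 & F^{\pi} \end{array}\right)$; the right column of $M_{1}^{\pi}X$ is then $\left(\begin{array}{c} -E^{D}(F^{\pi}F) \\ F^{\pi}F \end{array}\right)=0$, since the existence of $F^{\#}$ forces $F^{\pi}F=0$. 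Hence $M^{\#}=Y(M_{1}^{\#})^{2}X$, which is the claimed closed form.

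For the direction (1) $\Rightarrow$ (2), I would mimic the first half of Theorem 3.1 on the new block structure. Writing $M^{\#}=(X_{ij})$, the relations $MM^{\#}=M^{\#}M$ and $MM^{\#}M=M$ yield in particular $X_{11}F=0$, $X_{12}=X_{21}F$, $X_{11}E+X_{12}=I$, and $FX_{21}F=F$. Combining $X_{21}F=I-X_{11}E$ with $X_{11}F=0$ and $FEF^{\pi}=0$ (which gives $FE=FE\cdot FF^{D}$), I would extract an identity $F=F^{2}Z$ for an explicit $Z$; \cite[Lemma 1.2]{Z} then delivers $F^{\#}$. With $F^{\#}$ in hand, the Pierce decomposition of $M$ with respect to $e=\left(\begin{array}{cc} FF^{\#} & 0 \\ 0 & I \end{array}\right)$ and the condition $d^{\pi}ca^{\pi}=0$ from~\cite[Theorem 2.1]{MD} extract $E^{\pi}F^{\pi}=0$ exactly as in the last paragraph of Theorem 3.1.

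The main obstacle is direction (1) $\Rightarrow$ (2): Cline's formula transports Drazin but not group invertibility in general, so one cannot infer the existence of $M_{1}^{\#}$ from that of $M^{\#}$ without a separate index check. The delicate concrete step is to pin down $F=F^{2}Z$ from $FX_{21}F=F$ together with $FEF^{\pi}=0$; once this is achieved, the remaining pieces (group invertibility of $F$, and the vanishing $E^{\pi}F^{\pi}=0$) follow the template of Theorem 3.1 mechanically.
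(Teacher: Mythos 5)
The paper's proof is a one-line similarity: with $P=\left(\begin{array}{cc}0&I\\I&-E\end{array}\right)$, $P^{-1}=\left(\begin{array}{cc}E&I\\I&0\end{array}\right)$, one checks $M=P^{-1}NP$ where $N=\left(\begin{array}{cc}E&I\\F&0\end{array}\right)$; since $P$ is invertible, $M$ has a group inverse iff $N$ does, and $M^{\#}=P^{-1}N^{\#}P$, so both directions and the formula follow at once from Theorem 3.1. Your factorization $M=YX$, $XY=M_1$ uses the same two matrices $Y=P^{-1}$ and (essentially) the first column of $P$, but packaged as Cline's formula rather than as a conjugation, and this is where the trouble starts: as you yourself note, Cline's formula preserves Drazin invertibility but can raise the index by one, so it cannot carry group invertibility from $M$ back to $M_1$. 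Your $(2)\Rightarrow(1)$ direction is fine (the reduction of $MM^DM=M$ to $YM_1^{\pi}X=0$ and the observation $F^{\pi}F=0$ are correct, and uniqueness of the group inverse identifies $Y(M_1^{\#})^2X$ with the stated $\Gamma,\Delta,\Lambda,\Xi$), but $(1)\Rightarrow(2)$ is left as a sketch with a real gap.

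Concretely, the relations you list for $(1)\Rightarrow(2)$ do not close. From $MM^{\#}M=M$ and $MM^{\#}=M^{\#}M$ one gets $X_{11}E+X_{12}=I$, $X_{11}F=0$, $X_{12}=X_{21}F$ and $EX_{11}+FX_{21}=I$; your relation $FX_{21}F=F$ is a consequence of these but expresses only von Neumann regularity of $F$, which together with Drazin invertibility does not yield a group inverse. What is actually needed is $F=F^2X_{21}$, and to get it you must first establish $FX_{11}=0$ (note: the opposite side from the $X_{11}F=0$ you recorded). This follows by comparing the $(1,1)$ entry of $M^2M^{\#}=M$, namely $(E^2+F)X_{11}+EFX_{21}=E$, with $E(EX_{11}+FX_{21})=E$. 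Then multiplying $EX_{11}+FX_{21}=I$ on the left by $F$ and using $FEF^{\pi}=0$, i.e. $FE=FEFF^{D}$, gives $FEX_{11}=FEF^{D}(FX_{11})=0$, hence $F^2X_{21}=F$ and \cite[Lemma 1.2]{Z} applies; after that your Pierce-decomposition step for $E^{\pi}F^{\pi}=0$ would also have to be redone for the block pattern of $\left(\begin{array}{cc}E&F\\I&0\end{array}\right)$ rather than quoted from Theorem 3.1. All of this is avoidable: replace the Cline factorization by the similarity $M=P^{-1}NP$ and the corollary collapses to Theorem 3.1.
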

\begin{enumerate}
\item [(1)]{\it $M$ has group inverse.} \vspace{-.5mm}
\item [(2)]{\it $F$ has group inverse and $E^{\pi}F^{\pi}=0$.}\\
\end{enumerate}
In this case, $$M^{\#}=\left(
  \begin{array}{cc}
    \Gamma&\Delta\\
    \Lambda&\Xi\\
     \end{array}
\right),$$
where $$\begin{array}{lll}
\Gamma&=&F^{\pi}E^DF^{\pi},\\
\Delta&=&I-F^{\pi}E^DF^{\pi}E,\\
\Lambda&=&F^{\#}+(E^DF^{\pi})^2-E^DF^{\pi}EF^{\#},\\
\Xi&=&E^DF^{\pi}-F^{\#}E-(E^DF^{\pi})^2E+E^DF^{\pi}EF^{\#}E,
\end{array}$$
\begin{proof} Let $N=\left(
\begin{array}{cc}
E&I\\
F&0
\end{array}
\right)$. Then $$M=P^{-1}NP, P=\left(
\begin{array}{cc}
0&I\\
I&-E
\end{array}
\right).$$ Therefore $M$ has group inverse if and only if so does $N$, if and only if $F$ has group inverse and $E^{\pi}F^{\pi}=0$, by Theorem 3.1.
In this case, $$\begin{array}{lll}
M^{\#}&=&P^{-1}N^{\#}P\\
 &=&\left(
\begin{array}{cc}
E&I\\
I&0
\end{array}
\right)N^{\#}\left(
\begin{array}{cc}
0&I\\
I&-E
\end{array}
\right)\\
&=&\left(
\begin{array}{cc}
I&F^{\pi}E^DF^{\pi}\\
E^DF^{\pi}&F^{\#}+(E^DF^{\pi})^2-E^DF^{\pi}EF^{\#}
\end{array}
\right)\left(
\begin{array}{cc}
0&I\\
I&-E
\end{array}
\right)\\
&=&\left(
  \begin{array}{cc}
    \Gamma&\Delta\\
    \Lambda&\Xi\\
     \end{array}
\right),
\end{array}$$
where $$\begin{array}{lll}
\Gamma&=&F^{\pi}E^DF^{\pi},\\
\Delta&=&I-F^{\pi}E^DF^{\pi}E,\\
\Lambda&=&F^{\#}+(E^DF^{\pi})^2-E^DF^{\pi}EF^{\#},\\
\Xi&=&E^DF^{\pi}-F^{\#}E-(E^DF^{\pi})^2E+E^DF^{\pi}EF^{\#}E,
\end{array}$$ as asserted.\end{proof}

\begin{thm} Let $M=\left(
\begin{array}{cc}
E&F\\
I&0
\end{array}
\right)$ and $E,F,EF^{\pi}$ have Drazin inverse. If $F^{\pi}EF=0$, then the following are equivalent:
\end{thm}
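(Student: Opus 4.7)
The plan is to mirror the proof of Theorem~3.1 across the diagonal, since Theorem~3.3 is essentially its left-right dual: the matrix $\left(\begin{array}{cc}E&F\\I&0\end{array}\right)$ is the reflection of $\left(\begin{array}{cc}E&I\\F&0\end{array}\right)$, and the hypothesis $F^{\pi}EF=0$ is the reflection of $FEF^{\pi}=0$. Corollary~3.2 handled only the matrix-reflection (keeping the original one-sided hypothesis) via a similarity trick, so what is genuinely new here is that both the matrix and the hypothesis have been mirrored simultaneously, and the whole argument should transpose accordingly.

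For necessity, I will write $M^{\#}=\left(\begin{array}{cc}X_{11}&X_{12}\\X_{21}&X_{22}\end{array}\right)$ and unpack both $MM^{\#}=M^{\#}M$ and $MM^{\#}M=M$. In Theorem~3.1 the key consequence of this unpacking was the identity $F=F^2X_{12}$; here, with the identity block sitting in the bottom-left corner rather than the top-right, the analogous manipulation should instead produce $F=X_{21}F^2$, and by \cite[Lemma~1.2]{Z} this still forces $F$ to have a group inverse. I will then perform a Pierce decomposition with respect to $e=\left(\begin{array}{cc}FF^{\#}&0\\0&I\end{array}\right)$, observing that under $F^{\pi}EF=0$ the off-diagonal block opposite to the one killed in Theorem~3.1 now vanishes. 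Replacing the right-sided identity $(EF^{\pi})^D=E^DF^{\pi}$ by its left-sided mirror $(F^{\pi}E)^D=F^{\pi}E^D$, and feeding the pieces into the block characterization of the group inverse \cite[Theorem~2.1]{MD}, I expect to extract $F^{\pi}E^{\pi}=0$ as the remaining necessary condition.

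For sufficiency, I will write down the candidate $N$ for $M^{\#}$ obtained from the formula displayed in Theorem~3.1 by swapping each left-factor with its corresponding right-factor, and verify the three defining identities $MN=NM$, $M-MNM=0$, and $N-NMN=0$ by a block-by-block computation. The automatic identity $FF^{\pi}=0$ (valid whenever $F^{\#}$ exists), the standing hypothesis $F^{\pi}EF=0$, and the newly identified $F^{\pi}E^{\pi}=0$ will cancel the cross terms exactly as their transposed counterparts did in the proof of Theorem~3.1.

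The principal obstacle is bookkeeping rather than anything conceptual: I must calibrate correctly on which side each factor $F^{\pi}$, $FF^{\#}$, and $E^D$ sits in the candidate $N$. Once this is done the identities collapse by the same cancellations as before. As a sanity check I would run the factorisation $M=\left(\begin{array}{cc}E&I\\I&0\end{array}\right)\left(\begin{array}{cc}I&0\\0&F\end{array}\right)$, whose reversed product is precisely the matrix treated in Theorem~3.1, and use Cline's formula \cite[Theorem~2.2]{L} to transfer group invertibility between the two sides; but because $F$ need not be invertible this route still requires matching the side-switched hypotheses and is likely to be less direct than the explicit verification sketched above.
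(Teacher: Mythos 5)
Your proposal is correct, but it takes a genuinely different route from the paper. The paper proves this theorem in two lines by duality: it passes to the transpose $M^{T}=\bigl(\begin{smallmatrix}E^{T}&I\\ F^{T}&0\end{smallmatrix}\bigr)$, observes that $F^{\pi}EF=0$ transposes into the hypothesis $F^{T}E^{T}(F^{T})^{\pi}=0$ of Theorem 3.1, and reads off $M^{\#}=[(M^{T})^{\#}]^{T}$. You instead re-run the entire direct argument of Theorem 3.1 in mirrored form; your key intermediate identities are the right ones (from $X_{11}E+X_{12}=I$, $X_{11}F=0$ and $X_{12}=X_{21}F$ one gets $F=X_{12}F=X_{21}F^{2}$ using $EF=FF^{D}EF$, which is exactly the mirror of $F=F^{2}X_{12}$), and the sufficiency half is the same three-identity verification the paper performs for Theorem 3.1. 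What each approach buys: the paper's transpose argument is short, but it silently uses that an operator in $\mathcal{B}(X)$ is group invertible if and only if its Banach adjoint in $\mathcal{B}(X^{*})$ is, together with $(M^{\#})^{T}=(M^{T})^{\#}$ --- and the direction recovering group invertibility of $M$ from that of $M^{T}$ is not purely formal over a Banach space and is left unjustified; your direct verification is heavier on bookkeeping but self-contained, constructive, and valid in any ring or Banach algebra where no transpose is available. You are also right to distrust the Cline's-formula shortcut: the reversed product $\bigl(\begin{smallmatrix}I&0\\ 0&F\end{smallmatrix}\bigr)\bigl(\begin{smallmatrix}E&I\\ I&0\end{smallmatrix}\bigr)=\bigl(\begin{smallmatrix}E&I\\ F&0\end{smallmatrix}\bigr)$ falls under the hypothesis $FEF^{\pi}=0$ of Theorem 3.1 rather than under yours, and Cline's formula transfers Drazin, not group, invertibility, since the index can shift by one.
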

\begin{enumerate}
\item [(1)]{\it $M$ has group inverse.} \vspace{-.5mm}
\item [(2)]{\it $F$ has group inverse and $F^{\pi}E^{\pi}=0$.}\\
\end{enumerate}
In this case, $$M^{\#}=\left(
\begin{array}{cc}
F^{\pi}E^D&FF^{\#}\\
F^{\#}+(F^{\pi}E^D)^2-F^{\#}EF^{\pi}E^D&-F^{\#}EFF^{\#}
\end{array}
\right).$$
\begin{proof} We consider the transpose $M^T=\left(
\begin{array}{cc}
E^T&I\\
F^T&0
\end{array}
\right)$ of $M$. Then $M$ has group inverse if and only if so does $M^T$.
Applying Theorem 3.1, $M$ has group inverse if and only if $F^T$ has group inverse and $(E^T)^{\pi}(F^T)^{\pi}=0$, i.e.,
$F$ has group inverse and $F^{\pi}E^{\pi}=0$. In this case, we have
$$\begin{array}{l}
M^{\#}=[(M^T)^{\#}]^T=\\
{\small \left(
\begin{array}{cc}
(E^T)^D(F^T)^{\pi}&(F^T)^{\#}+((E^T)^D(F^T)^{\pi})^2-(E^T)^D(F^T)^{\pi}E^T(F^T)^{\#}\\
F^T(F^T)^{\#}&-F^T(F^T)^{\#}E^T(F^T)^{\#}
\end{array}
\right)^T},
\end{array}$$
as desired.\end{proof}

\begin{cor} Let $M=\left(
\begin{array}{cc}
E&I\\
F&0
\end{array}
\right)$ and $E,F$ have Drazin inverses. If $F^{\pi}EF=0$, then the following are equivalent:
\end{cor}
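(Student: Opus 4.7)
The plan is to deduce Corollary 3.4 from Theorem 3.3 via the similarity already exploited in the proof of Corollary 3.2. Setting
$$P=\left(\begin{array}{cc} 0 & I \\ I & -E \end{array}\right), \qquad P^{-1}=\left(\begin{array}{cc} E & I \\ I & 0 \end{array}\right),$$
a short block computation shows that $M = P N P^{-1}$ where $N:=\left(\begin{array}{cc} E & F \\ I & 0 \end{array}\right)$. Hence $M$ and $N$ have a group inverse simultaneously, and whenever they do, $M^{\#}=PN^{\#}P^{-1}$.

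The second step is to apply Theorem 3.3 to $N$: its standing condition $F^{\pi}EF=0$ is exactly our hypothesis, and its conclusion then asserts that $N$ has group inverse if and only if $F$ has group inverse and $F^{\pi}E^{\pi}=0$. Combined with the similarity above, this yields the desired equivalence for $M$, and the closed form of $M^{\#}$ is obtained by pre- and post-multiplying the explicit formula for $N^{\#}$ from Theorem 3.3 by $P$ and $P^{-1}$.

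The only subtle point is that Theorem 3.3 requires $EF^{\pi}\in\mathcal{B}(X)^D$, while Corollary 3.4 only asserts that $E$ and $F$ are Drazin invertible. I would bridge this gap as follows: multiplying $F^{\pi}EF=0$ on the right by $F^d$ gives $F^{\pi}E=F^{\pi}EF^{\pi}$, hence $p^{\pi}Ep=0$ for $p=FF^d$, and so the Peirce decomposition of $E$ at $p$ is upper block-triangular with diagonal blocks $pEp$ and $F^{\pi}EF^{\pi}$. Since $E$ is Drazin invertible, the converse of Lemma 2.1 (applied to the upper-triangular case) forces both diagonal blocks to be Drazin invertible; in particular $F^{\pi}E=F^{\pi}EF^{\pi}$ has a Drazin inverse, and Cline's formula then yields the same for $EF^{\pi}$. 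This verification is the main technical hurdle; once it is in place, everything else reduces to the mechanical assembly of the similarity step with Theorem 3.3.
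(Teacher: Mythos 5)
Your main line of argument is exactly the paper's: the similarity $M=P^{-1}NP$ with $P=\left(\begin{smallmatrix} E&I\\ I&0\end{smallmatrix}\right)$ and $N=\left(\begin{smallmatrix} E&F\\ I&0\end{smallmatrix}\right)$ (you merely name the conjugating matrix the other way round), followed by an application of Theorem 3.3 to $N$ and conjugation of the formula for $N^{\#}$. That part is correct and matches the paper step for step.

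The extra step you add to reconcile the hypotheses is where the proposal breaks. You rightly note that $F^{\pi}EFF^{D}=0$ makes $E$ block upper triangular with respect to $p=FF^{D}$, but the ``converse of Lemma 2.1'' you then invoke --- that Drazin invertibility of a triangular operator matrix forces Drazin invertibility of its diagonal blocks --- is false in $\mathcal{B}(X)$ when $X$ is infinite dimensional. A standard counterexample: on $H\oplus H$ with $S$ the unilateral shift, the operator $T=\left(\begin{smallmatrix} S& I-SS^{*}\\ 0& S^{*}\end{smallmatrix}\right)$ is unitary, hence invertible and a fortiori Drazin invertible, yet neither diagonal block is Drazin invertible because $\sigma(S)=\sigma(S^{*})$ is the closed unit disc. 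Taking $E=T$ and $F=p$ (the projection onto the first summand, which is group invertible) satisfies every hypothesis of the corollary and even condition (2), but $F^{\pi}EF^{\pi}=S^{*}$, so $F^{\pi}E$ and hence, via Cline's formula, $EF^{\pi}$ are \emph{not} Drazin invertible. So Drazin invertibility of $EF^{\pi}$ genuinely cannot be deduced from that of $E$ and $F$ together with $F^{\pi}EF=0$. You have in fact put your finger on a real defect: the paper's own proof silently cites Theorem 3.3 even though the corollary drops the hypothesis $EF^{\pi}\in\mathcal{B}(X)^{D}$. The honest repair is to restore that hypothesis in the statement (as in Theorem 3.3 and Corollaries 3.2 and 3.5), or to work with complex matrices where every operator is Drazin invertible --- not to attempt the derivation you propose.
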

\begin{enumerate}
\item [(1)]{\it $M$ has group inverse.} \vspace{-.5mm}
\item [(2)]{\it $F$ has group inverse and $F^{\pi}E^{\pi}=0$.}\\
\end{enumerate}
In this case, $$M^{\#}=\left(
  \begin{array}{cc}
    \Gamma&\Delta\\
    \Lambda&\Xi\\
     \end{array}
\right),$$
where $$\begin{array}{lll}
\Gamma&=&F^{\pi}E^DF^{\pi},\\
\Delta&=&F^{\#}+(F^{\pi}E^D)^2-F^{\#}EF^{\pi}E^D,\\
\Lambda&=&I-EF^{\pi}E^DF^{\pi},\\
\Xi&=&F^{\pi}E^D-EF^{\#}-E(F^{\pi}E^D)^2+EF^{\#}EF^{\pi}E^D,
\end{array}$$
\begin{proof} Let $N=\left(
\begin{array}{cc}
E&F\\
I&0
\end{array}
\right)$. Then $$M=P^{-1}NP, P=\left(
\begin{array}{cc}
E&I\\
I&0
\end{array}
\right).$$ In view of Theorem 3.3, $$N^{\#}=\left(
\begin{array}{cc}
F^{\pi}E^D&FF^{\#}\\
F^{\#}+(F^{\pi}E^D)^2-F^{\#}EF^{\pi}E^D&-F^{\#}EFF^{\#}
\end{array}
\right).$$ Hence, $M$ has group inverse if and only if so does $N$, if and only if $F$ has group inverse and $F^{\pi}E^{\pi}=0$, by Theorem 3.3.
Moreover, we have $$\begin{array}{lll}
M^{\#}&=&P^{-1}N^{\#}P\\
 &=&\left(
\begin{array}{cc}
0&I\\
I&-E
\end{array}
\right)N^{\#}\left(
\begin{array}{cc}
E&I\\
I&0
\end{array}
\right)\\
&=&\left(
  \begin{array}{cc}
    \Gamma&\Delta\\
    \Lambda&\Xi\\
     \end{array}
\right),
\end{array}$$
where $$\begin{array}{lll}
\Gamma&=&F^{\pi}E^DF^{\pi},\\
\Delta&=&F^{\#}+(F^{\pi}E^D)^2-F^{\#}EF^{\pi}E^D,\\
\Lambda&=&I-EF^{\pi}E^DF^{\pi},\\
\Xi&=&F^{\pi}E^D-EF^{\#}-E(F^{\pi}E^D)^2+EF^{\#}EF^{\pi}E^D,
\end{array}$$ as asserted. \end{proof}

We come now to extend ~\cite[Theorem 2.10]{Z} to wider cases as follows.

\begin{cor} Let $M=\left(
\begin{array}{cc}
E&F\\
I&0
\end{array}
\right)$ and $E,F,EF^{\pi}$ have Drazin inverses. If $EF=\lambda FE (\lambda \in {\Bbb C})$ or $EF^2=FEF$, then the following are equivalent:
\end{cor}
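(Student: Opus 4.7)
The strategy is to reduce the equivalence to Theorem~3.3, which establishes the same equivalence for $M$ under the single hypothesis $F^{\pi}EF=0$. The only new work needed is to verify that each of the structural hypotheses $EF=\lambda FE$ and $EF^{2}=FEF$, once combined with the group invertibility of $F$, already forces $F^{\pi}EF=0$.

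For the direction $(2)\Rightarrow(1)$, assume that $F$ has group inverse, so $F^{\pi}F=0$. In the case $EF=\lambda FE$ one computes $F^{\pi}EF=\lambda F^{\pi}FE=0$ directly. In the case $EF^{2}=FEF$, right-multiplying by $F^{\#}$ and using $F^{2}F^{\#}=F$ (valid because $FF^{\#}F=F$ for the group inverse) rewrites the hypothesis as $EF=FEFF^{\#}$, and then $F^{\pi}EF=(F^{\pi}F)EFF^{\#}=0$. In either case Theorem~3.3 applies and supplies both the group invertibility of $M$ and the explicit representation of $M^{\#}$ stated there.

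For the converse $(1)\Rightarrow(2)$ the main step is to deduce the group invertibility of $F$. Writing $M^{\#}=(X_{ij})$ and expanding $MM^{\#}=M^{\#}M$ together with $MM^{\#}M=M$ block-by-block yields the identities $X_{11}F=0$, $X_{12}=X_{21}F$, $EX_{11}+FX_{21}=I$ and $FX_{21}F=F$. Under $EF=\lambda FE$ with $\lambda\neq 0$, right-multiplying $X_{11}F=0$ by $E$ and using $FE=\lambda^{-1}EF$ gives $X_{11}EF=0$; right-multiplying $X_{11}E+X_{12}=I$ by $F$ then produces $X_{12}F=X_{21}F^{2}=F$, so that $F\in\mathcal{B}(X)F^{2}$, and \cite[Lemma~1.2]{Z} supplies the group inverse of $F$. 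The case $\lambda=0$ is precisely \cite[Theorem~2.10]{Z}. Under $EF^{2}=FEF$, the same strategy yields only the weaker relation $X_{21}F^{3}=F^{2}$, and a further use of the cubic identity combined with $F=FX_{21}F$ is needed to promote this to $X_{21}F^{2}=F$. Once $F^{\#}$ is known to exist, the forward argument gives $F^{\pi}EF=0$, and Theorem~3.3 supplies the remaining spectral condition $F^{\pi}E^{\pi}=0$ and the formula for $M^{\#}$.

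The main technical obstacle I anticipate is precisely this promotion step in the $EF^{2}=FEF$ subcase of $(1)\Rightarrow(2)$: extracting $F\in\mathcal{B}(X)F^{2}$, rather than merely $F^{2}\in\mathcal{B}(X)F^{3}$, from the cubic relation requires a small but nontrivial algebraic manipulation, since the convenient scalar-commutator trick that handles the $\lambda$-case is not available here. Everything else is routine block arithmetic followed by a direct appeal to Theorem~3.3.
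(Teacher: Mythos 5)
Your reduction to Theorem 3.3 is the same strategy the paper uses, and your $(2)\Rightarrow(1)$ direction is complete and matches the paper's computation: once $F^{\#}$ exists one has $F^{\pi}F=0$, whence $F^{\pi}EF=\lambda F^{\pi}FE=0$ in the first case and $F^{\pi}EF=F^{\pi}EF^{2}F^{\#}=F^{\pi}FEFF^{\#}=0$ in the second. (In the $\lambda$-case you do not need $\lambda\neq 0$ for the block argument either: from $X_{11}F=0$ one gets $X_{11}EF=\lambda(X_{11}F)E=0$ directly, so the separate appeal to \cite[Theorem 2.10]{Z} for $\lambda=0$ is unnecessary.) You are also right that computing $F^{\pi}EF$ this way presupposes the group invertibility of $F$; the paper's proof uses $F^{\#}$ and $F^{\pi}F=0$ without comment, i.e.\ it only verifies the hypothesis of Theorem 3.3 in the presence of condition (2) and does not attempt the block analysis you undertake for $(1)\Rightarrow(2)$.

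The genuine gap is exactly where you flag it, and it is not a ``small manipulation.'' In the subcase $EF^{2}=FEF$ your identities give $FX_{21}F=F$ and $X_{21}F^{3}=F^{2}$, and you claim these can be promoted to $X_{21}F^{2}=F$. They cannot, even together with the Drazin invertibility of $F$: take $F=\left(\begin{smallmatrix}0&1\\0&0\end{smallmatrix}\right)$ and $X_{21}=\left(\begin{smallmatrix}0&0\\1&0\end{smallmatrix}\right)$; then $FX_{21}F=F$ and $X_{21}F^{3}=0=F^{2}$, yet $F$ has no group inverse. So any correct promotion must import further relations from $MM^{\#}=M^{\#}M$, $M^{\#}MM^{\#}=M^{\#}$ and the hypothesis $EF^{2}=FEF$ itself, and you have not identified which ones or how they combine. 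As written, the $EF^{2}=FEF$ case of $(1)\Rightarrow(2)$ is unproved. (For comparison, the paper's entire proof of this corollary is the two-line verification of $F^{\pi}EF=0$ followed by the citation of Theorem 3.3; it does not supply this step either.)
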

\begin{enumerate}
\item [(1)]{\it $M$ has group inverse.} \vspace{-.5mm}
\item [(2)]{\it $F$ have group inverse and $F^{\pi}E^{\pi}=0$.}\\
\end{enumerate}
In this case, $$M^{\#}=\left(
\begin{array}{cc}
F^{\pi}E^D&FF^{\#}\\
F^{\#}+(F^{\pi}E^D)^2-F^{\#}EF^{\pi}E^D&-F^{\#}EFF^{\#}
\end{array}
\right).$$
\begin{proof} If $EF=\lambda FE (\lambda \in {\Bbb C})$, then $F^{\pi}EF=\lambda F^{\pi}FE=0$. If $EF^2=FEF$, then
$F^{\pi}EF=F^{\pi}EF^2F^{\#}=F^{\pi}FEFF^{\#}=0$. This completes the proof by Theorem 3.3.\end{proof}

\section{block-operator matrices with identical subblocks}

In ~\cite{C1}, Cao et al. considered the group inverse for block matrices with identical subblocks over a right Ore domain. In this section we are concerned with
the group inverse for block-operator matrices with identical subblocks over a Banach space.

\begin{thm} Let $M=\left(
\begin{array}{cc}
E&F\\
F&0
\end{array}
\right)$ and $E,EF^{\pi}$ have Drazin inverse and $F$ has group inverse. If $FEF^{\pi}=0$, then the following are equivalent:
\end{thm}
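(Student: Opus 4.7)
The plan is to adapt the Pierce-decomposition strategy of Theorem 3.1 to the identical-subblock setting. I will set $e = \mathrm{diag}(FF^\#, I)$ and show first that $eMe^\pi = 0$ by exploiting $FF^\pi = 0$ together with the identity $FF^\# E F^\pi = F^\# (FEF^\pi) = 0$. This places $M$ in block lower-triangular Pierce form relative to $e$: with $a = eMe$, $c = e^\pi M e$, $d = e^\pi M e^\pi$, one has
\[
a = \begin{pmatrix} FF^\# E FF^\# & F \\ F & 0 \end{pmatrix}, \quad c = \begin{pmatrix} F^\pi E FF^\# & 0 \\ 0 & 0 \end{pmatrix}, \quad d = \begin{pmatrix} EF^\pi & 0 \\ 0 & 0 \end{pmatrix},
\]
where the simplification $F^\pi E F^\pi = EF^\pi$ inside $d$ uses $FEF^\pi = 0$ once more.

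I will then propose and verify the explicit formula
\[
a^\# = \begin{pmatrix} 0 & F^\# \\ F^\# & -F^\# E F^\# \end{pmatrix},
\]
which shows $a$ is always group invertible, with spectral idempotent $a^\pi = \mathrm{diag}(0, F^\pi)$. The column-support structures of $c$ and $a^\pi$ then force $ca^\pi = 0$ automatically, so the compatibility condition $d^\pi ca^\pi = 0$ from \cite[Theorem 2.1]{MD}, which normally governs the group invertibility of a block lower-triangular matrix, is trivially satisfied here. Consequently $M$ has group inverse if and only if $d$ does, i.e.\ if and only if $EF^\pi$ has group inverse. Using $(EF^\pi)^D = E^D F^\pi$ (already established inside the proof of Theorem 2.5) together with $F^\pi EF^\pi = EF^\pi$ and $(EF^\pi)^2 = E^2 F^\pi$, the index-$1$ requirement $(EF^\pi)^\pi(EF^\pi) = 0$ collapses to the clean identity $EE^\pi F^\pi = 0$; I anticipate this is the content of the theorem's condition (2).

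For $(1) \Rightarrow (2)$ I will extract $d^\#$ from the Pierce component $e^\pi M^\# e^\pi$ of the given group inverse of $M$, which directly exhibits a group inverse of $EF^\pi$. For $(2) \Rightarrow (1)$, I will assemble the group inverse of $M$ via the block formula of \cite[Theorem 2.1]{MD}, substitute the above $a^\#$ and $d^\# = E^D F^\pi$, and verify $MN = NM$, $MNM = M$, $NMN = N$ by direct block computation; the cross-term cancellations rely on $FEF^\pi = 0$ and the derived identity $FE^D F^\pi = 0$ obtained exactly as in the proof of Theorem 3.1.

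The main technical obstacle is proposing and checking the formula for $a^\#$, whose top-right block $F$ is no longer an idempotent as in Theorem 3.1; the relations $FF^\# F = F$, $F^\# F F^\# = F^\#$, and $F^\# F = FF^\#$ must be applied repeatedly and in the correct order. The pleasant structural bonus over Theorem 3.1 is that no side condition analogous to $E^\pi F^\pi = 0$ is forced here: the identical-subblock shape makes $ca^\pi$ vanish automatically, so the equivalence reduces to a single condition purely on $EF^\pi$.
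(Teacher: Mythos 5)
Your proposal is correct, and it takes a genuinely different route from the paper's proof of this theorem. For $(2)\Rightarrow(1)$ the paper does not decompose $M$ directly: it passes to the auxiliary matrix $N=\left(\begin{array}{cc} E&I\\ F^2&0\end{array}\right)$, obtains $N^{\#}$ from the Pierce decomposition relative to $e=\mathrm{diag}(FF^{\#},I)$, and then returns to $M$ via Cline's formula; since Cline's formula only controls the Drazin inverse, the paper must additionally verify $\left(\begin{array}{cc} E&I\\ F&0\end{array}\right)N^{\pi}\mathrm{diag}(I,F)=0$ to force index one. You instead triangularize $M$ itself with the same idempotent, which works because $FF^{\pi}=0$ and $FF^{\#}EF^{\pi}=F^{\#}(FEF^{\pi})=0$; your $a^{\#}$ is correct (one checks $aa^{\#}=a^{\#}a=\mathrm{diag}(FF^{\#},FF^{\#})$, so $a^{\pi}=\mathrm{diag}(0,F^{\pi})$ in the corner), the vanishing of $ca^{\pi}$ is indeed automatic because the $(1,2)$ entry of $c$ is $F^{\pi}F=0$ here (in Theorem 3.1 it is $F^{\pi}$, which is exactly what produces the extra condition $E^{\pi}F^{\pi}=0$ there), and the identity $EF^{\pi}-(EF^{\pi})^{2}E^{D}F^{\pi}=EE^{\pi}F^{\pi}$ correctly turns group invertibility of $EF^{\pi}$ into condition (2). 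Your route is shorter, avoids Cline's formula and the index-one verification, and replaces the paper's $(1)\Rightarrow(2)$ argument (which factors $M^{2}$, derives $X_{11}(EF^{\pi})^{2}=EF^{\pi}$ from $M^{\#}M^{2}=M$, and cites \cite[Lemma 1.2]{Z}) by reading off the $(2,2)$ Pierce corner of $M^{\#}$. The one step you should make explicit in that direction is why $eM^{\#}e^{\pi}=0$: it holds because $M^{\#}=M^{d}$ and Lemma 2.1 applies ($a$ is group invertible and $d$ is Drazin invertible by hypothesis), after which the $(2,2)$ corner of $MM^{\#}M=M$ reads $d^{2}d^{d}=d$, i.e.\ $EF^{\pi}$ has group inverse. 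By uniqueness, the formula you would assemble from $\left(\begin{array}{cc} a^{\#}&0\\ z&d^{\#}\end{array}\right)_{e}$ with $z=d^{\pi}c(a^{\#})^{2}-d^{\#}ca^{\#}$ (the $ca^{\pi}$ term drops out) must agree with the paper's $\left(\begin{array}{cc} E&I\\ F&0\end{array}\right)(N^{\#})^{2}\mathrm{diag}(I,F)$.
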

\begin{enumerate}
\item [(1)]{\it $M$ has group inverse.} \vspace{-.5mm}
\item [(2)]{\it $EE^{\pi}F^{\pi}=0$.}
\end{enumerate}
In this case, $$M^{\#}=\left(
  \begin{array}{cc}
    \Gamma&\Delta\\
    \Lambda&\Xi\\
     \end{array}
\right),$$
where $$\begin{array}{rll}
\Gamma&=&[I-E^{\pi}F^{\pi}][E^DF^{\pi}+E^{\pi}F^{\pi}E(F^{\#})^2]+E^{\pi}F^{\pi}E(F^{\#})^2,\\
\Delta&=&[I-E^{\pi}F^{\pi}][F^{\#}-E^{\pi}F^{\pi}E(F^{\#})^2EF^{\#}-E^DF^{\pi}EF^{\#}]\\
&-&E^{\pi}F^{\pi}E(F^{\#})^2EF^{\#},\\
\Lambda&=&F[E^DF^{\pi}+E^{\pi}F^{\pi}E(F^{\#})^2]^2+F^{\#}-FE^{\pi}F^{\pi}[E(F^{\#})^2]^2\\
&-&FE^DF^{\pi}E(F^{\#})^2,\\
\Xi&=&[FE^DF^{\pi}+FE^{\pi}F^{\pi}E(F^{\#})^2][F^{\#}-E^{\pi}F^{\pi}E(F^{\#})^2EF^{\#}\\
&-&E^DF^{\pi}EF^{\#}]-[F^{\#}-FE^{\pi}F^{\pi}E(F^{\#})^2E(F^{\#})^2\\
&-&FE^DF^{\pi}E(F^{\#})^2]EF^{\#}.
\end{array}$$
\begin{proof}
$(1)\Rightarrow (2)$ Obviously, we have
$$\begin{array}{c}
M=\left(
\begin{array}{cc}
F^{\pi}E&F\\
F&0
\end{array}
\right)\left(
\begin{array}{cc}
I&I\\
F^{\#}E&0
\end{array}
\right),\\
M^2=\left(
\begin{array}{cc}
EF^{\pi}E+F^2&EF\\
0&F^2
\end{array}
\right)\left(
\begin{array}{cc}
I&I\\
F^{\#}E&0
\end{array}
\right).
\end{array}$$
Write $M^{\#}=\left(
\begin{array}{cc}
X_{11}&X_{12}\\
X_{21}&X_{22}
\end{array}
\right)$. Then $M^{\#}M^2=M$, and so
$$\left(
\begin{array}{cc}
X_{11}&X_{12}\\
X_{21}&X_{22}
\end{array}
\right)\left(
\begin{array}{cc}
EF^{\pi}E+F^2&EF\\
0&F^2
\end{array}
\right)=\left(
\begin{array}{cc}
F^{\pi}E&F\\
F&0
\end{array}
\right).$$ Therefore $$X_{11}EF^{\pi}E+X_{11}F^2=F^{\pi}E,$$ hence,
$$X_{11}EF^{\pi}EF^{\pi}+X_{11}F^2F^{\pi}=F^{\pi}EF^{\pi}.$$ It follows that
$$X_{11}(EF^{\pi})^2=F^{\pi}EF^{\pi}=EF^{\pi}.$$ In light of ~\cite[Lemma 1.2]{Z},
$EF^{\pi}$ has group inverse. By virtue of~\cite[Lemma 2.2]{Zhang}, $(EF^{\pi})^{\#}=E^DF^{\pi}.$ Therefore
$$\begin{array}{lll}
EF^{\pi}&=&(EF^{\pi})^{\#}EF^{\pi}EF^{\pi}\\
&=&E^DF^{\pi}EF^{\pi}EF^{\pi}\\
&=&EE^DF^{\pi};
\end{array}$$ hence, $EE^{\pi}F^{\pi}=0$.

$(2)\Rightarrow (1)$ Since $EE^{\pi}F^{\pi}=0$, we have $E^DF^{\pi}(EF^{\pi})^2=E^2E^DF^{\pi}=E(1-E^{\pi})F^{\pi}=
EF^{\pi}$, and so $EF^{\pi}$ has group inverse by~\cite[Lemma 1.2]{Z}.

Let $N=\left(
\begin{array}{cc}
E&I\\
F^2&0
\end{array}
\right)$. Choose $e=\left(
\begin{array}{cc}
FF^{\#}&0\\
0&I
\end{array}
\right).$ Then
$$a=\left(
\begin{array}{cc}
FF^{\#}E&FF^{\#}\\
F^2&0
\end{array}
\right),c=\left(
\begin{array}{cc}
F^{\pi}EFF^{\#}&F^{\pi}\\
0&0
\end{array}
\right),d=\left(
\begin{array}{cc}
EF^{\pi}&0\\
0&0
\end{array}
\right)$$ and $b=0$. Then
$$N=\left(
\begin{array}{cc}
a&0\\
c&d
\end{array}
\right)_e.$$
Moreover, we have $$a^{\#}=\left(
\begin{array}{cc}
0&(F^{\#})^2\\
FF^{\#}&-FF^{\#}E(F^{\#})^2
\end{array}
\right), d^{\#}=\left(
\begin{array}{cc}
E^DF^{\pi}&0\\
0&0
\end{array}
\right).$$ We compute that $$a^{\pi}=\left(
\begin{array}{cc}
0&0\\
0&F^{\pi}
\end{array}
\right), d^{\pi}=\left(
\begin{array}{cc}
E^{\pi}F^{\pi}&0\\
0&I
\end{array}
\right).$$ Obviously, $a^{\pi}cd^{\pi}=0$. In light of~\cite[Theorem 2.1]{MD}, $N$ has group inverse.
Moreover, we have $$N^{\#}=\left(
\begin{array}{cc}
a^{\#}&0\\
z&d^{\#}
\end{array}
\right),$$ where $$z=d^{\pi}c(a^{\#})^2+(d^{\#})^2ca^{\pi}-d^{\#}ca^{\#}.$$
Clearly, $$\begin{array}{rll}
d^{\pi}c(a^{\#})^2&=&\left(
\begin{array}{cc}
E^{\pi}F^{\pi}E(F^{\#})^2&-E^{\pi}F^{\pi}E(F^{\#})^2E(F^{\#})^2\\
0&0
\end{array}
\right),\\
(d^{\#})^2ca^{\pi}&=&\left(
\begin{array}{cc}
0&E^DF^{\pi}E^DF^{\pi}\\
0&0
\end{array}
\right), d^{\#}ca^{\#}=\left(
\begin{array}{cc}
0&E^DF^{\pi}E(F^{\#})^2\\
0&0
\end{array}
\right).
\end{array}$$
Hence we compute that $z=(z_{ij})$, where
$$\begin{array}{lll}
z_{11}&=&E^{\pi}F^{\pi}E(F^{\#})^2,\\
z_{12}&=&E^DF^{\pi}E^DF^{\pi}-E^{\pi}F^{\pi}E(F^{\#})^2E(F^{\#})^2-E^DF^{\pi}E(F^{\#})^2,\\
z_{21}&=&0, z_{22}=0.
\end{array}$$
Therefore $$N^{\#}=\left(
\begin{array}{cc}
\alpha&\beta\\
\gamma&\delta
\end{array}
\right),$$
where $$\begin{array}{lll}
\alpha&=&E^DF^{\pi}+E^{\pi}F^{\pi}E(F^{\#})^2,\\
\beta&=&(F^{\#})^2+E^DF^{\pi}E^DF^{\pi}-E^{\pi}F^{\pi}E(F^{\#})^2E(F^{\#})^2-E^DF^{\pi}E(F^{\#})^2,\\
\gamma&=&FF^{\#},\\
\delta&=&-FF^{\#}E(F^{\#})^2.
\end{array}$$
Hence, we have
$$\begin{array}{ll}
&NN^{\#}=N^{\#}N\\
=&\left(
\begin{array}{cc}
E&I\\
F^2&0
\end{array}
\right)\left(
\begin{array}{cc}
\alpha&\beta\\
\gamma&\delta
\end{array}
\right)\\
=&\left(
\begin{array}{cc}
\alpha&\beta\\
\gamma&\delta
\end{array}
\right)\left(
\begin{array}{cc}
E&I\\
F^2&0
\end{array}
\right)\\
=&\left(
\begin{array}{cc}
EE^DF^{\pi}+FF^{\#}&\alpha\\
F^2\alpha&FF^{\#}
\end{array}
\right).
\end{array}$$ Thus we have
$$N^{\pi}=\left(
\begin{array}{cc}
E^{\pi}F^{\pi}&-\alpha\\
-F^2\alpha&F^{\pi}
\end{array}
\right).$$
Obviously, one checks that
$$\begin{array}{c}
M=\left(
\begin{array}{cc}
E&I\\
F&0
\end{array}
\right)\left(
\begin{array}{cc}
I&0\\
0&F
\end{array}
\right),\\
N=\left(
\begin{array}{cc}
I&0\\
0&F
\end{array}
\right)\left(
\begin{array}{cc}
E&I\\
F&0
\end{array}
\right).
\end{array}$$
By virtue of Cline's formula, $M$ has Drazin inverse. We see that
$$\begin{array}{ll}
&\left(
\begin{array}{cc}
E&I\\
F&0
\end{array}
\right)N^{\pi}\left(
\begin{array}{cc}
I&0\\
0&F
\end{array}
\right)\\
=&\left(
\begin{array}{cc}
E&I\\
F&0
\end{array}
\right)\left(
\begin{array}{cc}
E^{\pi}F^{\pi}&-\alpha\\
-F^2\alpha&F^{\pi}
\end{array}
\right)\left(
\begin{array}{cc}
I&0\\
0&F
\end{array}
\right)\\
=&\left(
\begin{array}{cc}
-F^2\alpha&-E\alpha F\\
FE^{\pi}F^{\pi}&-F\alpha F
\end{array}
\right).
\end{array}$$ We directly compute that
$$\begin{array}{rll}
F^2\alpha&=&F^2E^DF^{\pi}+F^2E^{\pi}F^{\pi}E(F^{\#})^2\\
&=&F^2EF^{\pi}(E^DF^{\pi})^2+F^2(EF^{\pi})(E^DF^{\pi})E(F^{\#})^2=0,\\
E\alpha F&=&EE^{\pi}F^{\pi}EF^{\#}=0,\\
FE^{\pi}F^{\pi}&=&FE^DEF^{\pi}=F(E^DF^{\pi})(EF^{\pi})=F(EF^{\pi})(E^DF^{\pi})=0,\\
F\alpha F&=&FE^{\pi}F^{\pi}EF^{\#}=F(E^DF^{\pi})(EF^{\pi})EF^{\#}\\
&=&F(EF^{\pi})(E^DF^{\pi})EF^{\#}=0.
\end{array}$$ Hence $M=MM^DM$, i.e., $M$ has group inverse. Thus we have $M^{\#}=M^D$.

Moreover, we have $$\begin{array}{lll}
M^{D}&=&\left(
\begin{array}{cc}
E&I\\
F&0
\end{array}
\right)(N^{\#})^2\left(
\begin{array}{cc}
I&0\\
0&F
\end{array}
\right)\\
&=&\left(
\begin{array}{cc}
E&I\\
F&0
\end{array}
\right)\left(
\begin{array}{cc}
\alpha&\beta\\
\gamma&\delta
\end{array}
\right)^2\left(
\begin{array}{cc}
I&0\\
0&F
\end{array}
\right)\\
&=&\left(
  \begin{array}{cc}
    \Gamma&\Delta\\
    \Lambda&\Xi\\
     \end{array}
\right),
\end{array}$$
where $$\begin{array}{rll}
\Gamma&=&(E\alpha+\gamma)\alpha+(E\beta+\delta)\gamma,\\
\Delta&=&(E\alpha+\gamma)\beta F+(E\beta+\delta)\delta F,\\
\Lambda&=&F(\alpha^2+\beta\gamma),\\
\Xi&=&F(\alpha\beta +\beta\delta)F.
\end{array}$$ Therefore we complete the proof by the direct computation.\end{proof}

\begin{cor} Let $M=\left(
\begin{array}{cc}
E&F\\
F&0
\end{array}
\right)$ and $E,EF^{\pi}$ have Drazin inverse and $F$ has group inverse. If $F^{\pi}EF=0$, then the following are equivalent:\end{cor}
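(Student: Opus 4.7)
The plan is to reduce Corollary 4.2 to Theorem 4.1 via the transpose technique already used in passing from Theorem 3.1 to Theorem 3.3. Observe that the matrix $M=\left(\begin{smallmatrix}E&F\\F&0\end{smallmatrix}\right)$ has ``symmetric'' block shape, so
$$M^{T}=\left(\begin{array}{cc}E^{T}&F^{T}\\ F^{T}&0\end{array}\right)$$
fits the template of Theorem 4.1 with $(E,F)$ replaced by $(E^{T},F^{T})$. Since $M$ has group inverse if and only if $M^{T}$ does, and $(M^{\#})^{T}=(M^{T})^{\#}$, it suffices to translate the hypothesis, invoke Theorem 4.1 on $M^{T}$, and transpose the conclusion back.

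First I would translate the data. The identities $(F^{T})^{\pi}=(F^{\pi})^{T}$ and $(X^{D})^{T}=(X^{T})^{D}$ show that $E^{T}$ and $F^{T}$ inherit Drazin and group invertibility from $E$ and $F$. For the required Drazin invertibility of $E^{T}(F^{T})^{\pi}$, note that $(EF^{\pi})^{T}=(F^{T})^{\pi}E^{T}$; since $EF^{\pi}$ has Drazin inverse, so does $(F^{T})^{\pi}E^{T}$, and then Cline's formula (as used throughout Section 2) yields the Drazin invertibility of $E^{T}(F^{T})^{\pi}$. Finally, transposing $F^{\pi}EF=0$ gives $F^{T}E^{T}(F^{T})^{\pi}=0$, which is exactly the hypothesis of Theorem 4.1 for $M^{T}$.

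Next I would apply Theorem 4.1 to $M^{T}$: it has group inverse if and only if $E^{T}(E^{T})^{\pi}(F^{T})^{\pi}=0$. Transposing this equation yields $F^{\pi}E^{\pi}E=0$. Thus (1) is equivalent to
\begin{equation*}
F^{\pi}E^{\pi}E=0,
\end{equation*}
which I expect to be the content of statement (2) in the corollary. For the explicit representation of $M^{\#}$, I would take the formula for $(M^{T})^{\#}$ supplied by Theorem 4.1, substitute $E\to E^{T}$, $F\to F^{T}$ in each of $\Gamma,\Delta,\Lambda,\Xi$, and then transpose the resulting $2\times2$ block matrix; the interchange of off-diagonal blocks under transposition produces the analogue of the formula in Theorem 4.1 with the roles of the two ``rows'' swapped and each scalar block replaced by its transpose.

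The main obstacle is purely bookkeeping: verifying that the Drazin invertibility of $E^{T}(F^{T})^{\pi}$ really follows from that of $EF^{\pi}$ (handled by Cline's formula above) and then carrying out the transposition of the rather long expressions for $\Gamma,\Delta,\Lambda,\Xi$ without losing track of factor orders. No new identity is needed beyond Theorem 4.1 itself and Cline's formula; the genuine work has already been done in the proof of Theorem 4.1, and the corollary is obtained by a duality argument essentially identical in spirit to the one used to derive Theorem 3.3 from Theorem 3.1.
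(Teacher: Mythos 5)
Your proposal matches the paper's own proof: the paper likewise notes (via Cline's formula) that $F^{\pi}E$, i.e.\ $\bigl(E^{T}(F^{T})^{\pi}\bigr)^{T}$, inherits Drazin invertibility from $EF^{\pi}$, and then applies Theorem 4.1 to the transpose $M^{T}=\left(\begin{smallmatrix}E^{T}&F^{T}\\ F^{T}&0\end{smallmatrix}\right)$, transposing the condition $E^{T}(E^{T})^{\pi}(F^{T})^{\pi}=0$ back to $F^{\pi}E^{\pi}E=0$ and the block formula back to the stated $\Gamma,\Delta,\Lambda,\Xi$. Your bookkeeping of how the hypotheses and the off-diagonal blocks transform under transposition is exactly the content of the paper's (one-line) argument, so the approaches are essentially identical.
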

\begin{enumerate}
\item [(1)]{\it $M$ has group inverse.} \vspace{-.5mm}
\item [(2)]{\it $F^{\pi}E^{\pi}E=0$.}
\end{enumerate}
In this case, $$M^{\#}=\left(
  \begin{array}{cc}
    \Gamma&\Delta\\
    \Lambda&\Xi\\
     \end{array}
\right),$$
where $$\begin{array}{rll}
\Gamma&=&[F^{\pi}E^D+(F^{\#})^2EF^{\pi}E^{\pi}][I-F^{\pi}E^{\pi}]+(F^{\#})^2EF^{\pi}E^{\pi},\\
\Delta&=&[F^{\#}-F^{\#}E(F^{\#})^2EF^{\pi}E^{\pi}-F^{\#}EF^{\pi}E^D][I-F^{\pi}E^{\pi}]\\
&-&F^{\#}E(F^{\#})^2EF^{\pi}E^{\pi},\\
\Lambda&=&[F^{\pi}E^D+(F^{\#})^2EF^{\pi}E^{\pi}]^2F+F^{\#}-[(F^{\#})^2E]^2F^{\pi}E^{\pi}F\\
&-&(F^{\#})^2EF^{\pi}E^DF,\\
\Xi&=&[F^{\#}-F^{\#}E(F^{\#})^2EF^{\pi}E^{\pi}-F^{\#}EF^{\pi}E^D]\\
&&[F^{\pi}E^DF+(F^{\#})^2EF^{\pi}E^{\pi}F]-F^{\#}E[F^{\#}\\
&-&(F^{\#})^2E(F^{\#})^2EF^{\pi}E^{\pi}F-(F^{\#})^2EF^{\pi}E^DF].
\end{array}$$
\begin{proof} By virtue of Cline's formula, $F^{\pi}E$ has Drazin inverse. Then the proof is complete by applying Theorem 4.1 to the transpose $M^T=\left(
\begin{array}{cc}
E^T&F^T\\
F^T&0
\end{array}
\right).$\end{proof}

\begin{cor} Let $M=\left(
\begin{array}{cc}
E&F\\
F&0
\end{array}
\right)$ and $E,F$ have group inverse, $EF^{\pi}$ has Drazin inverse. If $F^{\pi}EF=0$, then $M$ has group inverse. In this case, $$M^{\#}=\left(
  \begin{array}{cc}
    \Gamma&\Delta\\
    \Lambda&\Xi\\
     \end{array}
\right),$$
where $$\begin{array}{rll}
\Gamma&=&[I-E^{\pi}F^{\pi}][E^{\#}F^{\pi}+E^{\pi}F^{\pi}E(F^{\#})^2]+E^{\pi}F^{\pi}E(F^{\#})^2,\\
\Delta&=&[I-E^{\pi}F^{\pi}][F^{\#}-E^{\pi}F^{\pi}E(F^{\#})^2EF^{\#}-E^{\#}F^{\pi}EF^{\#}]\\
&-&E^{\pi}F^{\pi}E(F^{\#})^2EF^{\#},\\
\Lambda&=&F[E^{\#}F^{\pi}+E^{\pi}F^{\pi}E(F^{\#})^2]^2+F^{\#}-FE^{\pi}F^{\pi}[E(F^{\#})^2]^2\\
&-&FE^{\#}F^{\pi}E(F^{\#})^2,\\
\Xi&=&[FE^{\#}F^{\pi}+FE^{\pi}F^{\pi}E(F^{\#})^2][F^{\#}-E^{\pi}F^{\pi}E(F^{\#})^2EF^{\#}\\
&-&E^{\#}F^{\pi}EF^{\#}]-[F^{\#}-FE^{\pi}F^{\pi}E(F^{\#})^2E(F^{\#})^2\\
&-&FE^{\#}F^{\pi}E(F^{\#})^2]EF^{\#}.
\end{array}$$
\end{cor}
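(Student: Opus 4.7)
The plan is to obtain Corollary 4.3 as an immediate specialization of Theorem 4.1 in which the extra equivalent condition of that theorem becomes automatic. The key identity is $EE^{\pi}=0$, valid whenever $E$ has group inverse: the axiom $EE^{\#}E=E$ gives $E^{2}E^{\#}=E$, whence $EE^{\pi}=E-E^{2}E^{\#}=0$. In particular, the side condition $EE^{\pi}F^{\pi}=0$ of Theorem 4.1 holds for free, and $E^{D}$ coincides with $E^{\#}$.

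With this preliminary observation, I would then verify the remaining hypotheses of Theorem 4.1 in our setting: $E$ and $EF^{\pi}$ are Drazin invertible (the former because every group invertible operator is Drazin invertible, the latter by assumption), $F$ is group invertible by assumption, and the vanishing condition $FEF^{\pi}=0$ needed by Theorem 4.1 is in hand. Applying the implication (2)$\Rightarrow$(1) of Theorem 4.1 then yields both the existence of $M^{\#}$ and its explicit formula. The listed expressions $\Gamma,\Delta,\Lambda,\Xi$ in Corollary 4.3 are obtained from the corresponding ones in Theorem 4.1 by the substitution $E^{D}\mapsto E^{\#}$ throughout; no fresh computation is required, only transcription.

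The only delicate point — and therefore the main obstacle — is the reconciliation of the stated vanishing hypothesis $F^{\pi}EF=0$ with the condition $FEF^{\pi}=0$ used in Theorem 4.1. Under the intended reading of the statement these conditions should line up, and the argument collapses to the one-line appeal above. Should one instead work from the hypothesis $F^{\pi}EF=0$ literally, the natural route is to invoke Corollary 4.2 in place of Theorem 4.1: the identity $E^{\pi}E=0$ (again forced by group invertibility of $E$) trivializes that corollary's side condition $F^{\pi}E^{\pi}E=0$, and the formula is then obtained by the same substitution $E^{D}\mapsto E^{\#}$. Either way, all substantive content already lives in the preceding results, and the proof amounts to this automatic verification together with the symbolic replacement.
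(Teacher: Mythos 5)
Your proposal matches the paper's own proof: the paper likewise observes that group invertibility of $E$ forces $EE^{\pi}=0$, hence $EE^{\pi}F^{\pi}=0$, so condition (2) of Theorem 4.1 is automatic, and the displayed formula is just Theorem 4.1's with $E^{D}=E^{\#}$. The hypothesis mismatch you flag ($F^{\pi}EF=0$ versus the $FEF^{\pi}=0$ required by Theorem 4.1) is present in the paper itself, whose proof silently invokes Theorem 4.1 anyway; since the stated formulas are those of Theorem 4.1 rather than the transposed ones of Corollary 4.2, the hypothesis as printed is evidently a slip for $FEF^{\pi}=0$, and your fallback via Corollary 4.2 would in fact produce different (transposed) formulas, not the ones displayed.
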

\begin{proof} Since $E$ has group inverse, we see that $EE^{\pi}=0$, and so $EE^{\pi}F^{\pi}=0$. In light of Theorem 4.1, $M$ has group inverse. Therefore we obtain the representation of $M^{\#}$ by the formula in Theorem 4.1. \end{proof}

As an immediate consequence of Corollary 4.3, we have

\begin{cor} Let $M=\left(
\begin{array}{cc}
E&F\\
F&0
\end{array}
\right)$ and $E,F$ have group inverse, $EF^{\pi}$ has Drazin inverse. If $EF=\lambda FE ~(\lambda \in {\Bbb C})$ or $EF^2=FEF$, then $M$ has group inverse. In this case, $$M^{\#}=\left(
  \begin{array}{cc}
    \Gamma&\Delta\\
    \Lambda&\Xi\\
     \end{array}
\right),$$
where $$\begin{array}{rll}
\Gamma&=&[I-E^{\pi}F^{\pi}][E^{\#}F^{\pi}+E^{\pi}F^{\pi}E(F^{\#})^2]+E^{\pi}F^{\pi}E(F^{\#})^2,\\
\Delta&=&[I-E^{\pi}F^{\pi}][F^{\#}-E^{\pi}F^{\pi}E(F^{\#})^2EF^{\#}-E^{\#}F^{\pi}EF^{\#}]\\
&-&E^{\pi}F^{\pi}E(F^{\#})^2EF^{\#},\\
\Lambda&=&F[E^{\#}F^{\pi}+E^{\pi}F^{\pi}E(F^{\#})^2]^2+F^{\#}-FE^{\pi}F^{\pi}[E(F^{\#})^2]^2\\
&-&FE^{\#}F^{\pi}E(F^{\#})^2,\\
\Xi&=&[FE^{\#}F^{\pi}+FE^{\pi}F^{\pi}E(F^{\#})^2][F^{\#}-E^{\pi}F^{\pi}E(F^{\#})^2EF^{\#}\\
&-&E^{\#}F^{\pi}EF^{\#}]-[F^{\#}-FE^{\pi}F^{\pi}E(F^{\#})^2E(F^{\#})^2\\
&-&FE^{\#}F^{\pi}E(F^{\#})^2]EF^{\#}.
\end{array}$$
\end{cor}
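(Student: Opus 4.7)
The plan is to reduce this corollary directly to Corollary 4.3 by verifying that either of the two commutativity-type hypotheses forces $F^{\pi}EF=0$, which is precisely the standing hypothesis of Corollary 4.3. Once that reduction is done, the hypotheses of Corollary 4.3 (namely that $E, EF^{\pi}$ have Drazin inverse, $F$ has group inverse, and $F^{\pi}EF=0$) are all satisfied, and moreover $E$ having group inverse gives $EE^{\pi}=0$, hence in particular the side condition $F^{\pi}E^{\pi}E=0$ needed in Corollary 4.3 holds automatically. Therefore $M$ has group inverse and the explicit formula is inherited from Corollary 4.3 (with $E^{D}$ replaced by $E^{\#}$ since $E$ has index at most $1$).

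First I would handle the case $EF=\lambda FE$. Here I simply compute
\[
F^{\pi}EF \;=\; F^{\pi}(EF) \;=\; \lambda F^{\pi}FE \;=\; 0,
\]
using that $F$ has group inverse and hence $F^{\pi}F=FF^{\pi}=0$. Next I would handle the case $EF^{2}=FEF$. Writing $F=F^{2}F^{\#}$, I would compute
\[
F^{\pi}EF \;=\; F^{\pi}EF^{2}F^{\#} \;=\; F^{\pi}(FEF)F^{\#} \;=\; (F^{\pi}F)EFF^{\#} \;=\; 0.
\]
Thus in either case $F^{\pi}EF=0$.

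Having verified $F^{\pi}EF=0$, I would apply Corollary 4.3 verbatim. The only thing to note is that Corollary 4.3 asserts the group invertibility of $M$ under the hypothesis $F^{\pi}E^{\pi}E=0$, and one must check that hypothesis here. But since $E$ has group inverse, $E^{\pi}E=0$, so a fortiori $F^{\pi}E^{\pi}E=0$. Hence Corollary 4.3 produces the group inverse of $M$, and the formula in the statement is exactly the expression from Corollary 4.3 with $E^{D}$ replaced by $E^{\#}$, which is legitimate because Drazin and group inverses coincide when the index is $\le 1$.

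There is essentially no technical obstacle here; the corollary is a direct specialization of Corollary 4.3 via two one-line reductions and the identity $E^{\pi}E=0$. The only point requiring a little care is making sure the listed expressions for $\Gamma,\Delta,\Lambda,\Xi$ agree with those of Corollary 4.3 after the substitution $E^{D}\leadsto E^{\#}$, which is a purely notational verification and does not require any fresh computation.
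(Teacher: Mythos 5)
Your proposal is correct and follows essentially the same route as the paper: the paper's proof likewise reduces both hypotheses to $F^{\pi}EF=0$ via the identities $F^{\pi}EF=\lambda F^{\pi}FE=0$ and $F^{\pi}EF=F^{\pi}EF^{2}F^{\#}=F^{\pi}FEFF^{\#}=0$, and then invokes Corollary 4.3. The only quibble is that you describe Corollary 4.3's hypotheses using the wording of Corollary 4.2 (the condition $F^{\pi}E^{\pi}E=0$ is not a stated hypothesis of Corollary 4.3), but since $E^{\pi}E=0$ for a group invertible $E$ this does not affect the argument.
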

\begin{proof} As in proof of Corollary 3.5, we obtain the result by Corollary 4.3.\end{proof}

We illustrate Theorem 4.1 by a numerical example.

\begin{exam} Let $M=\left(
\begin{array}{cc}
E&F\\
F&0
\end{array}
\right)\in {\Bbb C}^{4\times 4}$, where $E=\left(
\begin{array}{cc}
1& 2\\
0&-1
\end{array}
\right),F=\left(
\begin{array}{cc}
i& i\\
0&0
\end{array}
\right)\in {\Bbb C}^{2\times 2},$ $i^2=-1$. Then $$M^{\#}=\left(
\begin{array}{cccc}
0&1&-i&-i\\
0&-1&0&0\\
-i&-i&1&1\\
0&0&0&0
\end{array}
\right).$$\end{exam}
\begin{proof} Obviously, we have $$\begin{array}{rll}
E^{\#}&=&\left(
\begin{array}{cc}
1&2\\
0&-1
\end{array}
\right), E^{\pi}=0;\\
F^{\#}&=&\left(
\begin{array}{cc}
-i& -i\\
0&0
\end{array}
\right), F^{\pi}=\left(
\begin{array}{cc}
0&-1\\
0&1
\end{array}
\right).
\end{array}$$ Hence we check that $FEF^{\pi}=0, EE^{\pi}F^{\pi}=0.$ Construct $\Gamma, \Delta, \Lambda$ and $\Xi$ as in Theorem 4.1. Then we compute that
$$\begin{array}{rll}
\Gamma&=&\left(
\begin{array}{cc}
0&1\\
0&-1
\end{array}
\right), \Delta=\left(
\begin{array}{cc}
-i&-i\\
0&0
\end{array}
\right),\\
\Lambda&=&\left(
\begin{array}{cc}
-i&-i\\
0&0
\end{array}
\right), \Xi=\left(
\begin{array}{cc}
1&1\\
0&0
\end{array}
\right).
\end{array}$$
This completes the proof by Theorem 4.1.\end{proof}

\vskip10mm

\end{document}